\documentclass[12pt]{amsart}
\usepackage{times}
\usepackage{amsfonts, amsmath, amssymb, amsthm, mathtools, dsfont}
\usepackage[margin=2cm]{geometry}
\usepackage[colorlinks=true,citecolor=blue]{hyperref}
\usepackage{setspace, enumerate}
\usepackage{comment}
\usepackage[title]{appendix}
\usepackage{todonotes}

\theoremstyle{plain}
\pagestyle{plain}

\newtheorem{theorem}{Theorem}[section]
\newtheorem{proposition}[theorem]{Proposition}
\newtheorem{lemma}[theorem]{Lemma}

\newtheorem{corollary}[theorem]{Corollary}
\newtheorem{claim}[theorem]{Claim}
\newtheorem{question}[theorem]{Question}

\newcommand{\st}{\text{st}}
\newcommand{\cl}{\text{cl}}

\theoremstyle{definition}

\newtheorem{remark}[theorem]{Remark}

\title{Topology of independence complexes and cycle structure of hypergraphs}

\author{Jinha Kim}
\address{Department of Mathematics\\
Chonnam National University\\
Gwangju\\
Republic of Korea 
and Discrete Mathematics Group\\
Institute for Basic Science (IBS)\\
Daejeon\\
Republic of Korea}
\email{jinhakim@jnu.ac.kr}

\subjclass[2020]{05E45, 55U10}

\keywords{Independence complexes, Hypergraphs, Berge cycles, Star cluster, Homotopy type}

\date{\today}

\begin{document}
\maketitle
\begin{abstract}
Recently, Zhang and Wu proved a conjecture of Kalai and Meshulam, showing that for every graph $G$ without induced cycles of length divisible by $3$, the sum of all reduced Betti numbers of its independence complex $I(G)$ is at most $1$. We extend this result to the hypergraph setting. Namely, we show that the same conclusion holds for any hypergraph $H$ that does not contain a Berge cycle of length divisible by $3$. This establishes a broader connection between forbidden cycle structures and the topological simplicity of independence complexes. As a key tool, we introduce a hypergraph analogue of Barmak's star cluster theorem for graphs. This new theorem implies, in particular, that if a hypergraph $H$ has a vertex $v$ that is not isolated and is not contained in an induced Berge cycle of length $3$, then there exists a hypergraph $H'$ with fewer vertices than $H$ such that the independence complex of $H$ is homotopy equivalent to the suspension of the independence complex of $H'$.
\end{abstract}

\section{Introduction}
Throughout the paper, we consider only finite vertex set for graphs, hypergraphs and simplicial complexes.
An \textit{independent set} of a graph $G$ is a vertex subset that does not contain any edge.
The \textit{independence complex} $I(G)$ of $G$ is the collection of all independent sets of $G$.
The class of independence complexes of graphs is the same as the class of flag complexes, the simplicial complexes where all minimal non-faces have size $2$.
In general, for any finite simplicial complex $X$, there is an independence complex $I(G)$ that is homotopy equivalent to $X$. 
To see this, consider the barycentric subdivision $\text{sd}(X)$ of $X$.
Clearly, $\text{sd}(X)$ is homotopy equivalent to $X$.
On the other hand, $\text{sd}(X)$ is a flag complex, and hence it equals to an independence complex of some graph.
Thus homotopy types and homology groups of independence complexes of graphs are as various as those of simplicial complexes.

Many results have been obtained on topological properties of independence complexes of graphs, especially on the homotopy types and homology groups of independence complexes of special classes of graphs.
For example, Kozlov~\cite{Koz99} determined the homotopy types of independence complexes of paths and cycles, and Ehrenborg and Hetyei~\cite{EH06} showed the homotopy types of independence complexes of forests are either contractible or a sphere.
Furthermore, Kim~\cite{Kim22} proved that the homotopy type of the independence complex of a graph with no induced cycles of length divisible by $3$ are also either contractible or a sphere, strengthening the result by Engstr\"{o}m~\cite{Eng20}. 
See also \cite{Kawa10} for chordal graphs, \cite{BLN08} for square grids, and \cite{Braun11} for stable Kneser graphs.

Meanwhile, Barmak~\cite{Barmak} introduced the notion of star clusters in independence complexes of graphs, which turned out to be useful to understand the independence complexes of graphs. He actually gave alternative proofs of several known results on the independence complexes of graphs by simpler arguments using his star cluster theorem: see Theorem~\ref{thm:starcluster}.
We generalize Barmak's result, providing an analogous statement for the independence complexes of hypergraphs, and then relate it with a hypergraph operation.
Using the star cluster theorems, we also establish a hypergraph analogue of a weaker version of the result in \cite{Kim22}, showing that the boundedness of the total Betti number of the independence complex of a hypergraph with certain forbidden cycle structure.

\subsection{Basic terminology about hypergraphs}
A \textit{hypergraph} $H$ on $V$ is a collection of nonempty subsets of $V$, where each element of $H$ is called an \textit{edge} of $H$.
Note that a graph is a hypergraph whose edges have size exactly $2$.
We denote by $V(H)$ the vertex set of $H$.
The independent sets and independence complexes of hypergraphs can be defined in analogous way.
An \textit{independent set} of a hypergraph $H$ is a vertex subset that does not contain an edge as a subsets, and the \textit{independence complex} $I(H)$ of $H$ is a collection of all independent sets of $H$. That is,
\[I(H):=\{W \subset V(H) \mid W \text{ contains no edge of } H\}.\]

A vertex $v \in V(H)$ is {\em isolated} in $H$ if it is not contained in any edge of $H$. Observe that, if $v$ is isolated in $H$, then $I(H)$ is a cone with apex $v$, thus is contractible.

If $H$ has two edges $f$ and $g$ with $g \subset f$, then we clearly have $I(H)=I(H-\{f\})$.
In addition, if $\{v\}$ is an edge of $H$ for a vertex $v$, then $I(H)=I(H-v)$ where $H-v$ is the hypergraph obtained from $H$ deleting the vertex $v$ and all edges containing $v$.
Therefore, unless stated otherwise, we assume that every edge in a hypergraph is inclusion-minimal and has size at least $2$.

Now we consider ``cycles" in hypergraphs.
Among various possible definitions of cycles in hypergraphs, we consider ``Berge cycle", which is broadest concept of cycles.
 \begin{itemize}
        \item A \textit{Berge cycle} of length $k$ in a hypergraph $H$ is an alternating sequence $C=v_1 e_1 v_2 e_2 \cdots v_k e_k$ of $k$ distinct vertices $v_1,v_2,\ldots,v_k$ and $k$ distinct edges $e_1,e_2,\ldots,e_k$ of $H$ where $v_i,v_{i+1} \in e_i$ for all index $i$ chosen modulo $k$. Note that in graphs, Berge cycles are equal to usual cycles.
        \item A Berge cycle $C=v_1 e_1 v_2 e_2 \cdots v_k e_k$ in $H$ is \textit{induced} if $e_i \cap \{v_1,v_2,\ldots,v_k\}=\{v_i,v_{i+1}\}$ for all $i $ chosen modulo $k$ and there is no edge of $H$ containing two non-consecutive vertices of $C$.
        \item Two Berge cycles $C_1=v_1f_1v_2f_2\ldots v_kf_k$, $C_2=w_1 g_1 w_2 g_2 \ldots w_{\ell} g_{\ell}$ are \textit{disjoint} if $\{v_1,\ldots,v_k\} \cap \{w_1,\ldots,w_{\ell}\} =\varnothing$  and $\{f_1,\ldots,f_k\} \cap \{g_1,\ldots,g_{\ell}\} =\varnothing$.
For a hypergraph $H$, let $t(H)$ be the maximum integer $k$ such that $H$ has $k$ pairwise disjoint ternary Berge cycles.
Note that $t(H)=0$ if and only if $H$ has no ternary Berge cycle.
    \end{itemize}

\subsection{Total Betti numbers of independence complexes of hypergraphs}
Our main theorem concerns the Betti numbers of independence complexes of certain hypergraphs. It can be derived as an application of the star cluster theorem, which will be introduced later.
This is inspired by a conjecture by Kalai and Meshulam~\cite{KM} for graphs, that suggests a connection between the topology of independence complexes and the cycle structure of graphs.
They conjectured that for a graph with no induced cycle of length $0$ modulo $3$, the number of independent sets of odd size and the number of independent sets of even size differ by at most $1$. Such a graph is said to be {\em ternary}. This conjecture was proved in \cite{CSSS20} by a purely combinatorial argument.

Indeed, the difference between the number of independent sets of odd size and the number of independent sets of even size in a graph $G$ is actually equal to the {\em reduced Euler characteristic} $\tilde{\chi}(I(G))$ of $I(G)$, which is a well-known topological invariant of $I(G)$.
In this respect, Kalai and Meshulam also conjectured a stronger version that the sum of all reduced Betti numbers of $I(G)$ is at most $1$ for any ternary graph $G$. This stronger conjecture was proved in \cite{WZ25}, and even a stronger statement was proved in \cite{Kim22}, arguing that the homotopy type of $I(G)$ of a ternary graph $G$ is either contractible or homotopy equivalent to a sphere.

Going back further than the presentation of the Kalai--Meshulam's conjectures, the following result by Kozlov~\cite{Koz99} on independence complexes of cycles already showed a phenomenon related to modulo $3$.
That is, if $C_{\ell}$ is a cycle of length $\ell$, then
\[I(C_{\ell}) \simeq
\begin{cases}
    \mathbb{S}^k \vee \mathbb{S}^k & \text{ if } \ell=3k+3,\\
    \mathbb{S}^k & \text{ if } \ell=3k+2,3k+4,
\end{cases}\]
where $\mathbb{S}^k$ is the $k$-dimensional sphere and $X \vee Y$ is the wedge sum of two topological spaces of $X$ and $Y$.
Furthermore, Csorba~\cite{Csorba} proved that if $\tilde{G}$ is a graph obtained from $G$ by replacing an edge by a path of length $4$, then $I(\tilde{G}) \simeq \Sigma I(G)$, which  directly explains the above Kozlov's result since $\Sigma \mathbb{S}^k \simeq \mathbb{S}^{k+1}$.
The result by Csorba is actually a special case of the star cluster theorem.
Thus it is natural to suspect that the star cluster theorem can be related to the Kalai--Meshulam conjecture.

Considering hypergraphs, we prove the following theorem using the star cluster theorem.
\begin{theorem}\label{thm:main2}
Let $H$ be a hypergraph with no Berge cycle of length divisible by $3$.
Then the sum of all reduced Betti numbers of $I(H)$ is at most $1$.
\end{theorem}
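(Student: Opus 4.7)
The plan is to prove Theorem~\ref{thm:main2} by induction on $|V(H)|$.

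\emph{Base case.} If $H$ has no edges, then $I(H)$ is a full simplex on $V(H)$ (hence contractible), or equals $\{\emptyset\}$ when $V(H) = \emptyset$; in either case the total reduced Betti number is at most $1$.

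\emph{Inductive step.} Suppose $|V(H)| \ge 1$ and the theorem holds for hypergraphs with strictly fewer vertices. If $H$ contains an isolated vertex then $I(H)$ is a cone, hence contractible, and we are done. Otherwise every vertex is non-isolated. Pick any vertex $v$. Since $3$ is divisible by $3$, the hypothesis rules out Berge $3$-cycles entirely, so $v$ cannot lie in any induced Berge $3$-cycle; Theorem~\ref{thm:cor}(1) then produces a hypergraph $H'$ on $V(H) \setminus \{v\}$ with $I(H) \simeq \Sigma I(H')$. Because suspension shifts reduced Betti numbers by one degree, the sum of all reduced Betti numbers of $I(H)$ equals that of $I(H')$.

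The main obstacle is a key lemma: the hypergraph $H'$ also has no Berge cycle of length divisible by $3$, so that the inductive hypothesis applies. To establish it, one makes the construction of $H'$ explicit via Theorem~\ref{thm:main}: the faces of $I(H')$ are those $\sigma \subseteq V(H)\setminus\{v\}$ with $\sigma \cup \{v\} \in I(H)$ and $\sigma \cup \tilde{e}_i \in I(H)$ for some $i$, and the edges of $H'$ are the minimal non-faces of this complex. A case analysis shows the edges of $H'$ fall into three types: (A) edges of $H$ not containing $v$; (B) the trimmed sets $\tilde{e}_i$; and (C) ``transversal'' sets of the form $\bigcup_i(f_i \setminus \tilde{e}_i)$ for some choice of edges $f_i$ of $H$ not containing $v$. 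Given a hypothetical Berge cycle $C'$ of length $\ell \equiv 0 \pmod 3$ in $H'$, one would lift it to a Berge cycle $C$ in $H$ by leaving type-(A) edges unchanged, replacing each type-(B) edge $\tilde{e}_k$ by $e_k$ (which preserves the length), and replacing each type-(C) edge by a short detour through $v$ that adds exactly three edges and hence preserves the length modulo $3$. The delicate point is when two or more type-(B)/(C) edges occur in $C'$, since $v$ may appear at most once in a Berge cycle of $H$; in that case one needs to argue that the multi-transversal structure itself already contains a Berge cycle of $H$ of length divisible by $3$ (not passing through $v$), contradicting the hypothesis, or else reroute using the absence of induced short Berge cycles to consolidate the detours into a single valid cycle.

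Granted the key lemma, the inductive hypothesis applied to $H'$ yields that the total reduced Betti number of $I(H')$ is at most $1$, and therefore the same bound holds for $I(H) \simeq \Sigma I(H')$, completing the induction.
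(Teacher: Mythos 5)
There is a genuine gap, and it is fatal to the proposed induction: your key lemma is false. The hypergraph $H'$ produced by Theorem~\ref{thm:cor}(1) is exactly the hypergraph $H_v$ of Section~\ref{sec:structure}, and the paper exhibits (in the Remark following Question~\ref{que1}, with Figure~\ref{fig:sc_ex4}) a hypergraph $H$ with no ternary Berge cycle for which $H_v$ contains a triangle, i.e.\ a Berge cycle of length $3$. So the inductive hypothesis cannot be applied to $H'$. The failure is not a repairable technicality in your lifting argument: the problem is precisely the case you flag as ``delicate,'' where a ternary cycle of $H_v$ uses two or more edges of types (B)/(C). Each such edge forces a detour through $v$ in any lift, $v$ can be used only once in a Berge cycle of $H$, and the transversal edges $\bigcup_i(f_i\setminus e_i)$ can genuinely create new short cycles in $H_v$ that correspond to no forbidden cycle of $H$. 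A further warning sign: if your induction went through, it would in particular reprove the graph case (the total-Betti-number Kalai--Meshulam conjecture of \cite{WZ23}) by an elementary vertex induction, which is far stronger than what is known.

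The paper's actual proof sidesteps $H_v$ entirely. It replaces an edge $e=\{v_1,\dots,v_k\}$ of size at least $3$ by a gadget: a new vertex $w$ joined by $2$-element edges to new vertices $u_1,\dots,u_k$, each $u_i$ joined to $v_i$. Lemma~\ref{lem5} shows $I(H_e)\simeq \Sigma I(H)$ (by applying the star cluster theorem at $w$, where the structure is completely controlled), so the total Betti number is preserved; Corollary~\ref{cor1} shows that $H_e$ inherits the absence of ternary Berge cycles, because any ternary Berge cycle of $H_e$ through the gadget contracts to one of $H$ of length $3$ less. Iterating reduces to a graph with no ternary cycle, where Theorem~\ref{thm:graphcase} of Zhang--Wu is invoked. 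The essential difference from your plan is that the reduction is chosen so that the cycle-preservation lemma is actually true, at the cost of leaning on the (nontrivial) graph-case theorem rather than giving a self-contained induction.
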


\subsection{Star clusters in hypergraphs}
We now introduce the star cluster theorem, which serves as a central part in the proof of Theorem~\ref{thm:main2}.
We have the following theorem for star clusters in independence complexes of hypergraphs, which generalizes Barmak's result \cite{Barmak}.
Here $\Sigma X$ is the suspension of a simplicial complex $X$ and $\st(\sigma)$ of $I(H)$ is the subcomplex whose maximal faces are precisely the maximal faces of $I(H)$ that contain $\sigma$.
\begin{theorem}[Star cluster theorem]\label{thm:main}
Let $H$ be a hypergraph and $v$ be a vertex of $H$ such that $v$ is not isolated and is not contained in an induced Berge cycle of length $3$.
Then
\[I(H) \simeq \Sigma (\st(v) \cap (\bigcup_{i=1}^{k} \st(\tilde{e}_i))),\]
where $e_1, e_2, \ldots, e_k$ are the edges of $H$ containing $v$ and $\tilde{e}_i=e_i\setminus\{v\}$.
\end{theorem}

One important remark is that every simplicial complex can be given as an independence complex of some hypergraph. 
For a simplicial complex $X$ on $V$, let $H_X$ be the hypergraph on $V$ where the edges of $H_X$ are the minimal non-faces of $X$. 
Then it is clear that $X$ is equal to the independence complex of $H_X$.
Thus Theorem~\ref{thm:main} can be interpreted to a result for arbitrary simplicial complexes.

Also, we can deduce the following from Theorem~\ref{thm:main}.
\begin{theorem}\label{thm:cor}
Let $H$ be a hypergraph and $v$ be a vertex of $H$ such that $v$ is not isolated and is not contained in an induced Berge cycle of length $3$.
\begin{enumerate}
    \item There is a hypergraph $H'$ defined on $V(H)\setminus\{v\}$ such that $I(H) \simeq \Sigma I(H')$.
    \item $I(H)$ has the homotopy type of a suspension.
    \item $\tilde{H}_1(I(H))$ is a free abelian group.
\end{enumerate}
\end{theorem}

\subsection*{Organization}
In Section~\ref{sec:pre}, we introduce basic terminologies and topological backgrounds. We present the proof of Theorem~\ref{thm:main} in Section~\ref{sec:main_pf} and introduce a specific structural condition for Theorem~\ref{thm:main} and discuss some examples in Section~\ref{sec:structure}.
In Section~\ref{sec:KM}, we prove Theorem~\ref{thm:main2} and also discuss a relation between the cycle structure and bounds on the total Betti number, using the Erd\H{o}s--Posa theorem for cycles of length $0$ modulo $3$.

\section{Preliminary}\label{sec:pre}
This section provides basic terminology and the necessary background about homotopy and homology in algebraic topology, all of which is standard graduate-level material; we refer to \cite{Hat} for a comprehensive reference. Readers already familiar with these topics may proceed to the next section.

A(n) \textit{(abstact) simplicial complex} $X$ on a vertex set $V$ is a collection of subsets of $V$ that is closed under taking subsets.
Each element of a simplicial complex $X$ is called a \textit{face} of $X$, and a \textit{missing face} of $X$ is an inclusion-minimal non-face of $X$.

For a simplicial complex $X$, we denote by $\tilde{H}_i(X)$ the \textit{$i$-th (reduced) homology group} of $X$ over $\mathbb{Z}$.
The \textit{$i$-th (reduced) Betti number} of $X$ is the integer $\tilde{\beta}_i(X)=\text{rank}(\tilde{H}_i(X))$, and the \textit{total Betti number} $\beta(X)$ of $X$ is the sum of all reduced Betti numbers of $X$.

We denote by $X \simeq Y$ if two topological spaces $X$ and $Y$ are homotopy equivalent. 
For a topological space $X$, the \textit{suspension} $\Sigma X$ of $X$ is the quotient space
\[\Sigma X := X\times [0,1]/(X\times\{0\})/(X\times \{1\}),\] 
which means $\Sigma X$ is obtained from $X\times [0,1]$ by identifying each of $X\times\{0\}$ and $X\times\{1\}$ as single points.
For example, $\Sigma \mathbb{S}^n \simeq \mathbb{S}^{n+1}$ for the $n$-dimensional sphere $\mathbb{S}^n$.

\begin{proposition}\label{prop1}
For a simplicial complex $X$, $\tilde{H}_{i+1}(\Sigma X) \cong \tilde{H}_i(X)$ for all $i$.
Hence, $\beta(\Sigma X)=\beta(X)$.
\end{proposition}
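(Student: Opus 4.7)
The plan is to deduce the homology isomorphism from the reduced Mayer--Vietoris sequence applied to the standard decomposition of the suspension as a union of two cones, and then read off the equality of total Betti numbers.

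Concretely, I would write $\Sigma X = C_+ \cup C_-$, where $C_+$ is the image of $X\times [1/2,1]$ and $C_-$ the image of $X\times [0,1/2]$ under the quotient map defining $\Sigma X$. Each of these is a cone on $X$ and is therefore contractible, while $C_+\cap C_-$ deformation retracts onto the equator $X\times\{1/2\}\cong X$. (To apply Mayer--Vietoris cleanly, one should either take slight open thickenings of $C_\pm$, or work simplicially so that $C_\pm$ become subcomplexes of a CW structure on $\Sigma X$.)

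Then for each $i$, the reduced Mayer--Vietoris long exact sequence contains the segment
\[
\tilde{H}_{i+1}(C_+)\oplus \tilde{H}_{i+1}(C_-)\longrightarrow \tilde{H}_{i+1}(\Sigma X)\longrightarrow \tilde{H}_i(X)\longrightarrow \tilde{H}_i(C_+)\oplus \tilde{H}_i(C_-).
\]
Contractibility of $C_+$ and $C_-$ kills the two outer terms, so the connecting homomorphism is an isomorphism $\tilde{H}_{i+1}(\Sigma X)\cong \tilde{H}_i(X)$ for every $i$. Taking ranks and summing over $i$ then yields $\beta(\Sigma X)=\beta(X)$.

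This is a textbook computation, so there is no genuine obstacle; the only mild care needed is the standard set-up issue of ensuring the Mayer--Vietoris hypothesis holds for the chosen pair $(C_+,C_-)$. An alternative route is to note that the suspension is the join $X\ast \mathbb{S}^0$ and invoke the join formula for reduced homology, but the Mayer--Vietoris argument above seems to be the shortest self-contained proof.
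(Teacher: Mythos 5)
Your proof is correct: this is the standard Mayer--Vietoris computation for the suspension, and the care you take about making $(C_+,C_-)$ an excisive pair (thickening, or working with the simplicial/CW model) is exactly the right hygiene. The paper states Proposition~\ref{prop1} as a well-known fact and gives no proof of its own, so there is nothing to compare against; your argument is the textbook one and fills that gap completely, including the passage from the degree-shifting isomorphism to $\beta(\Sigma X)=\beta(X)$ by taking ranks and summing.
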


\begin{proposition}\label{prop2}
Let $K, K_1, K_2$ be simplicial complexes such that $K=K_1 \cup K_2$. Then 
   \begin{enumerate}[(a)]
    \item $K/K_2 \simeq K_1 /(K_1 \cap K_2)$, and
    \item for simplicial complexes $M$ and $L$, if $L$ is contractible in $M$, then $M/L \simeq M \vee \Sigma L$.
    In particular, $M/L \simeq \Sigma L$ if $M$ is contractible, and $M/L \simeq M$ if $L$ is contractible.
\end{enumerate}
By combining (a) and (b), we also obtain the following.
\begin{enumerate}
    \item[(c)] Assume both $K_1$ and $K_2$ are contractible.
    Then $$K \simeq K/K_2 \simeq K_1/(K_1 \cap K_2) \simeq \Sigma (K_1 \cap K_2).$$
    If $K_1 \cap K_2$ is contractible, then $K$ is also contractible.
\end{enumerate} 
\end{proposition}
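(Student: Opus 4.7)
The plan is to establish parts (a) and (b) independently using standard tools from algebraic topology, and then derive (c) by chaining them. Part (a) is essentially a point-set statement about quotients, while part (b) rests on the theory of cofibrations and mapping cones. I expect part (b) to be the main substantive step; parts (a) and (c) are largely bookkeeping once the cofibration framework is in place.

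For part (a), I would consider the continuous map $K_1/(K_1\cap K_2)\to K/K_2$ induced by the inclusion $K_1\hookrightarrow K$ followed by the quotient $K\to K/K_2$. Since $K=K_1\cup K_2$, every point of $K/K_2$ either lies in the image of $K_1$ or is the collapsed basepoint, and two points of $K_1$ are identified in $K/K_2$ precisely when they both lie in $K_1\cap K_2$. This yields a continuous bijection, and promoting it to a homeomorphism uses the fact that inclusions of subcomplexes are closed cofibrations of CW complexes, so the two quotient topologies match.

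For part (b), the key tool is the mapping cone construction. Since the inclusion $i:L\hookrightarrow M$ of a subcomplex is a cofibration, the quotient $M/L$ is homotopy equivalent to the mapping cone $M\cup_i CL$. A standard fact about mapping cones states that, up to homotopy equivalence, $M\cup_f CL$ depends only on the homotopy class of the attaching map $f$. The hypothesis that $L$ is contractible in $M$ asserts that $i$ is null-homotopic, so $M\cup_i CL$ is homotopy equivalent to the mapping cone of a constant map $L\to M$, which is precisely $M\vee (CL/L)=M\vee \Sigma L$. The two specializations follow immediately: if $M$ is contractible, then $M\vee\Sigma L\simeq \Sigma L$; if $L$ is contractible, then $\Sigma L$ is contractible and $M\vee\Sigma L\simeq M$.

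Part (c) is then obtained by assembling the previous parts. Apply (b) with $(M,L)=(K,K_2)$: since $K_2$ is contractible, $K/K_2\simeq K$. Apply (a) directly to get $K/K_2\simeq K_1/(K_1\cap K_2)$. Apply (b) once more with $(M,L)=(K_1,K_1\cap K_2)$: since $K_1$ is contractible, every subcomplex is contractible in it, so $K_1/(K_1\cap K_2)\simeq \Sigma(K_1\cap K_2)$. Concatenating these equivalences gives the full chain $K\simeq K/K_2\simeq K_1/(K_1\cap K_2)\simeq \Sigma(K_1\cap K_2)$, and the final clause follows because the suspension of a contractible space is contractible. The main delicacy throughout is verifying the cofibration hypotheses that justify replacing quotients by mapping cones, but since all the inclusions in question are inclusions of subcomplexes into simplicial complexes, this is automatic.
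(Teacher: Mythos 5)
Your proof is correct. The paper states Proposition~\ref{prop2} as known background and gives no proof, so there is nothing to diverge from; your argument is the standard one (these are essentially Barmak's preliminary lemmas, resting on Hatcher's results that $M/L\simeq M\cup_i CL$ for a cofibration $i$ and that mapping cones depend only on the homotopy class of the attaching map), and the chaining in (c) is exactly how the paper intends (a) and (b) to combine. One cosmetic point: in (a) the homeomorphism of quotients follows most directly from compactness of finite simplicial complexes (a continuous bijection from a compact space to a Hausdorff space is a homeomorphism), rather than from the cofibration property you invoke, though either route works.
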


When we study simplicial complxes, one of the most frequently used tool is the Mayer--Vietoris exact sequence.
Let $A,B$ be simplicial complexes and $K=A \cup B$. Then the \textit{Mayer--Vietoris sequence} for $(K,A,B)$ is the following long exact sequence:
\begin{equation}\label{mvseq}
    \cdots \to \tilde{H}_i (A \cap B) \to \tilde{H}_i(A) \oplus \tilde{H}_i(B) \xrightarrow{\gamma_i} \tilde{H}_i(K) \xrightarrow{\delta_i} \tilde{H}_{i-1} (A \cap B) \to \cdots.
\end{equation}
The following inequality about the Betti numbers, which will be used in Section~\ref{sec:KM}, follows from \eqref{mvseq}.
\begin{lemma}\label{mvseq-betti}
For simplicial complexes $A,B$ and $K=A \cup B$, the following holds for all $i$:
\[\tilde{\beta}_i(K) \leq \tilde{\beta}_i(A)+\tilde{\beta}_i(B)+\tilde{\beta}_{i-1}(A\cap B).\]
\end{lemma}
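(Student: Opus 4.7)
The plan is to extract the inequality directly from the Mayer--Vietoris long exact sequence \eqref{mvseq}, using only the elementary fact that for any short exact-like segment of abelian groups the rank is subadditive. Since the statement concerns reduced Betti numbers, which are ranks of the free parts, I will work with ranks throughout and exploit exactness at $\tilde{H}_i(K)$.

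First I would isolate the three consecutive terms
\[
\tilde{H}_i(A)\oplus\tilde{H}_i(B)\ \xrightarrow{\gamma_i}\ \tilde{H}_i(K)\ \xrightarrow{\delta_i}\ \tilde{H}_{i-1}(A\cap B).
\]
Exactness at $\tilde{H}_i(K)$ says $\ker(\delta_i)=\mathrm{im}(\gamma_i)$, so there is a short exact sequence
\[
0\to \mathrm{im}(\gamma_i)\to \tilde{H}_i(K)\to \mathrm{im}(\delta_i)\to 0.
\]
Ranks are additive on short exact sequences of finitely generated abelian groups, hence
\[
\tilde{\beta}_i(K)=\mathrm{rank}(\mathrm{im}(\gamma_i))+\mathrm{rank}(\mathrm{im}(\delta_i)).
\]

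Next I would bound each summand by the rank of the ambient group: $\mathrm{rank}(\mathrm{im}(\gamma_i))\le \mathrm{rank}(\tilde{H}_i(A)\oplus\tilde{H}_i(B))=\tilde{\beta}_i(A)+\tilde{\beta}_i(B)$, and $\mathrm{rank}(\mathrm{im}(\delta_i))\le \mathrm{rank}(\tilde{H}_{i-1}(A\cap B))=\tilde{\beta}_{i-1}(A\cap B)$. Adding these two inequalities gives the claim.

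There is essentially no obstacle here; the only care needed is the standard fact that for a quotient $G/H$ of finitely generated abelian groups one has $\mathrm{rank}(G)=\mathrm{rank}(H)+\mathrm{rank}(G/H)$, so that the ranks of image and cokernel of a homomorphism add up correctly. Everything else is bookkeeping along the Mayer--Vietoris sequence, and the result is immediate.
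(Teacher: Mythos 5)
Your proof is correct and follows exactly the paper's argument: both extract the short exact sequence $0\to\ker(\delta_i)=\mathrm{im}(\gamma_i)\to\tilde{H}_i(K)\to\mathrm{im}(\delta_i)\to 0$ from exactness of the Mayer--Vietoris sequence at $\tilde{H}_i(K)$, use additivity of rank, and bound the ranks of the two images by the ranks of their ambient groups. No differences worth noting.
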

\begin{proof}
Consider the Mayer--Vietoris sequence~\eqref{mvseq} for $(K,A,B)$.
Since \eqref{mvseq} is exact, we obtain the following short exact sequence for each $i$:
$$0 \to \text{ker}\delta_i \hookrightarrow \tilde{H}_i(K) \xrightarrow{\delta_i} \text{im}\delta_i \to 0.$$
Then we obtain $\tilde{\beta}_i(K)=\text{rk}(\text{ker}\delta_i)+\text{rk}(\text{im}\delta_i)
=\text{rk}(\text{im}\gamma_i)+\text{rk}(\text{im}\delta_i) \leq \tilde{\beta}_i(A)+\tilde{\beta}_i(B)+\tilde{\beta}_{i-1}(A\cap B).$
\end{proof}

\section{Proof of Theorem~\ref{thm:main} : star cluster theorem}\label{sec:main_pf}

In this section, we prove Theorem~\ref{thm:main}.
Before that, we first recall Barmak's theorem on star clusters in independence complexes of graphs.

For a graph $G$ and a vertex $v$ of $G$, the \textit{neighborhood} $N_G(v)$ of $v$ in $G$ is the set of vertices adjacent to $v$ in $G$ and $N_G[v]:=N_G(v) \cup \{v\}$.
For a vertex subset $W$ of $G$, the \textit{neighborhood} $N_G(W)$ of $W$ in $G$ is $N_G(W):=\bigcup_{w \in W}N_G(w)$ and $N_G[W]:=N_G(W)\cup W$.
Let $X$ be a simplicial complex and $\sigma$ be a face of $X$.
The \textit{star} $\text{st}_X(\sigma)$ of $\sigma$ in $X$ is defined as
\[\text{st}_X (\sigma)=\{\tau \subset V \mid \tau \cup \sigma \in X\}.\]
Note that $\text{st}_X(\sigma)$ is always contractible.
The \textit{star cluster} $\text{SC}_X(\sigma)$ of $\sigma$ in $X$ is defined as $$\text{SC}_X(\sigma)=\bigcup_{v \in \sigma}\text{st}_X(v).$$
We write $\text{st}_X(\sigma)=\text{st}(\sigma)$ and $\text{SC}_X(\sigma)=\text{SC}(\sigma)$ if it is clear which ground complex we are dealing with.

\begin{theorem}[\cite{Barmak}, Star cluster theorem for graphs]\label{thm:starcluster}
    Let $G$ be a graph and $v$ be a vertex that is not isolated and is not contained in a triangle.
    Then 
    \[I(G) \simeq \Sigma (\st(v) \cap SC(N_G(v))).\]
\end{theorem}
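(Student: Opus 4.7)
The plan is to present $I(G)$ as the union of two contractible subcomplexes and read off the conclusion from Proposition~\ref{prop2}(c). Concretely, I would establish the decomposition
\[I(G) = \st(v) \cup SC(N_G(v)),\]
verify that each piece is contractible, and then apply Proposition~\ref{prop2}(c) to obtain $I(G) \simeq \Sigma(\st(v) \cap SC(N_G(v)))$.

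For the decomposition, each of the two subcomplexes is clearly contained in $I(G)$, so it suffices to show every maximal face $W$ of $I(G)$ lies in one of the two pieces. If $v \in W$ then $W \in \st(v)$; otherwise, maximality of $W$ forces $W \cup \{v\}$ to contain an edge of $G$, which (since $W$ itself is independent) must be of the form $\{v,u\}$ with $u \in W \cap N_G(v)$, giving $W \in \st(u) \subseteq SC(N_G(v))$.

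The star $\st(v)$ is contractible automatically. The crux is showing that $SC(N_G(v))$ is contractible, and this is where the hypothesis on $v$ enters decisively: since $v$ lies in no triangle, no two vertices of $N_G(v)$ are adjacent in $G$, so $N_G(v)$ is itself an independent set, hence a face of $I(G)$ (and nonempty because $v$ is not isolated). Consequently, every nonempty subset $\sigma \subseteq N_G(v)$ is a face, and
\[\bigcap_{u \in \sigma} \st(u) = \st(\sigma),\]
which is nonempty and contractible. Applying the Nerve lemma to the good cover $\{\st(u)\}_{u \in N_G(v)}$ of $SC(N_G(v))$, we see that $SC(N_G(v))$ is homotopy equivalent to its nerve, which is the full simplex on $N_G(v)$, and therefore contractible.

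The main obstacle I expect is precisely the contractibility of $SC(N_G(v))$: the decomposition of $I(G)$ is essentially combinatorial and the contractibility of $\st(v)$ is built in, but $SC(N_G(v))$ is assembled from many stars whose combined topology is nontrivial in general. The Nerve lemma provides the cleanest path, and the triangle-free hypothesis on $v$ is exactly what is needed to ensure that all iterated intersections of the covering stars remain stars (hence contractible and nonempty). Without this hypothesis, $N_G(v)$ may fail to be a face and the nerve argument collapses, which also foreshadows why Theorem~\ref{thm:main} will need to forbid an \emph{induced Berge $3$-cycle} through $v$ rather than merely a graph triangle.
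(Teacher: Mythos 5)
Your proof is correct, and the decomposition $I(G)=\st(v)\cup SC(N_G(v))$ together with the final appeal to Proposition~\ref{prop2}(c) is exactly what the paper does (the paper does not prove Theorem~\ref{thm:starcluster} directly, but derives it as the graph case of Theorem~\ref{thm:main}, whose proof uses the same two-piece decomposition). Where you genuinely diverge is in the contractibility of $SC(N_G(v))$: you invoke the Nerve lemma, after checking that the triangle-free hypothesis makes $N_G(v)$ an independent set so that $\bigcap_{u\in\sigma}\st(u)=\st(\sigma)$ is nonempty and contractible for every $\sigma\subseteq N_G(v)$, whence the nerve is a full simplex. The paper instead proves this via Lemma~\ref{lem1}, an induction on the number of stars that repeatedly applies Proposition~\ref{prop2}(c): the intersection of the first $k-1$ stars with the $k$-th is again a union of stars (of the unions $\sigma_i\cup\sigma_k$) satisfying the same hypothesis, so contractibility propagates. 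The two arguments hinge on the same identity for intersections of stars (your $\bigcap_{u\in\sigma}\st(u)=\st(\sigma)$ is the graph instance of the paper's Claim~\ref{claim1}), so they are close in substance; the Nerve lemma gives a shorter, more conceptual proof in the graph case, while the paper's induction stays entirely within the elementary gluing tool Proposition~\ref{prop2}(c) (the Nerve lemma is not among the paper's stated preliminaries) and is phrased so that it applies verbatim in the hypergraph setting, where the pieces are stars of the sets $\tilde e_i$ rather than of single vertices.
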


The above theorem is useful to understand the independence complex of graphs, as $\st(v) \cap SC(N_G(v))$ is a simplicial complex defined on the vertex set $V(G)-N_G[v]$ that has fewer vertices than $I(G)$.
However, it is not guaranteed that $\st(v) \cap SC(N_G(v))$ is an independence complex of a graph.
For this reason, Theorem~\ref{thm:starcluster} has limitation in relating the topology of independence complexes and graph structure.
On the other hand, $\st(v) \cap SC(N_G(v))$ can be viewed as the independence complex of some hypergraph, and this leads us to invent the following hypergraph analogue of Theorem~\ref{thm:starcluster}.

\begingroup
\def\thetheorem{\ref{thm:main}}
\begin{theorem}
Let $H$ be a hypergraph and $v$ be a vertex of $H$ such that $v$ is not isolated and is not contained in an induced Berge cycle of length $3$.
Then
\[I(H) \simeq \Sigma (\st(v) \cap (\bigcup_{i=1}^{k} \st(\tilde{e}_i))),\]
where $e_1, e_2, \ldots, e_k$ are the edges of $H$ containing $v$ and $\tilde{e}_i=e_i\setminus\{v\}$.
\end{theorem}
\addtocounter{theorem}{-1}
\endgroup

Note that Theorem~\ref{thm:starcluster} can be obtained from Theorem~\ref{thm:main} by replacing $H$ with a graph.
In order to prove Theorem~\ref{thm:main}, We first prove the following lemma.
\begin{lemma}\label{lem1}
Let $X$ be a simplicial complex on $V$.
Let $\sigma \in X$ and $\sigma_1, \sigma_2, \ldots, \sigma_k \subset \sigma$.
Assume that if $u \in \sigma_i \setminus \sigma_j$ and $w \in \sigma_j\setminus\sigma_i$ for some $i,j \in [k]$, then all missing faces containing both $u$ and $w$ have a vertex not in $\st_X(\sigma_i) \cup \st_X(\sigma_j)$.
Then $\bigcup_{i=1}^{k} \st_X(\sigma_i)$ is contractible.
\end{lemma}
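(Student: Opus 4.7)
The plan is to prove the lemma by induction on $k$, using Proposition~\ref{prop2}(c). The base case $k=1$ is immediate because $\st_X(\sigma_1)$ is a star and hence contractible. For the inductive step, I set $A=\bigcup_{i=1}^{k-1}\st_X(\sigma_i)$ and $B=\st_X(\sigma_k)$, both of which we want to show are contractible with contractible intersection, so that $A\cup B$ is contractible by Proposition~\ref{prop2}(c). The hypothesis is clearly inherited by any sub-family, so $A$ is contractible by the inductive hypothesis and $B$ is contractible trivially.

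The crux of the argument is identifying the intersection. I claim that under the hypothesis, for any $i\neq j$,
\[\st_X(\sigma_i)\cap \st_X(\sigma_j)=\st_X(\sigma_i\cup \sigma_j).\]
The inclusion $\supseteq$ is automatic. For $\subseteq$, take $\tau\in \st_X(\sigma_i)\cap \st_X(\sigma_j)$ and suppose for contradiction that $\tau\cup \sigma_i\cup \sigma_j\notin X$. Then some missing face $\mu\subseteq \tau\cup \sigma_i\cup\sigma_j$ exists; since $\tau\cup\sigma_i$ and $\tau\cup\sigma_j$ are faces, $\mu$ must contain some $u\in \sigma_i\setminus\sigma_j$ and some $w\in\sigma_j\setminus\sigma_i$. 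By hypothesis, $\mu$ contains a vertex $x\notin\st_X(\sigma_i)\cup\st_X(\sigma_j)$. Because each $\sigma_\ell\subseteq\sigma\in X$, every vertex of $\sigma_i$ lies in $\st_X(\sigma_i)$ and every vertex of $\sigma_j$ lies in $\st_X(\sigma_j)$, so $x\in\tau$. But then $\{x\}\cup\sigma_i\subseteq \tau\cup\sigma_i\in X$, forcing $x\in\st_X(\sigma_i)$, a contradiction.

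Using this identity,
\[A\cap B=\bigcup_{i=1}^{k-1}\bigl(\st_X(\sigma_i)\cap\st_X(\sigma_k)\bigr)=\bigcup_{i=1}^{k-1}\st_X(\sigma_i\cup\sigma_k).\]
Setting $\tau_i=\sigma_i\cup\sigma_k\subseteq\sigma$ gives a family of $k-1$ faces. I need to verify the hypothesis of the lemma for this new family: if $u\in\tau_i\setminus\tau_j$ and $w\in\tau_j\setminus\tau_i$, then in particular $u\in\sigma_i\setminus\sigma_j$ and $w\in\sigma_j\setminus\sigma_i$, so by the original hypothesis every missing face through $u,w$ contains a vertex outside $\st_X(\sigma_i)\cup\st_X(\sigma_j)$, and since $\st_X(\tau_i)\subseteq\st_X(\sigma_i)$ and $\st_X(\tau_j)\subseteq\st_X(\sigma_j)$ this vertex also lies outside $\st_X(\tau_i)\cup\st_X(\tau_j)$. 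The inductive hypothesis then applies to $\{\tau_1,\ldots,\tau_{k-1}\}$, yielding contractibility of $A\cap B$. Proposition~\ref{prop2}(c) then finishes the proof.

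The main obstacle is the identification of the pairwise intersection of stars with a single star, because in general this equality fails; it is precisely the hypothesis on missing faces that rules out the obstructing configurations. Once that lemma-within-the-lemma is proved, everything else — preservation of the hypothesis under the replacement $\sigma_i\mapsto\sigma_i\cup\sigma_k$ and the gluing step — is routine.
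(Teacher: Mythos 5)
Your proof is correct and follows essentially the same route as the paper's: induction on $k$, the key identity $\st_X(\sigma_i)\cap\st_X(\sigma_k)=\st_X(\sigma_i\cup\sigma_k)$ established via a missing-face argument using the hypothesis, verification that the hypothesis passes to the family $\{\sigma_i\cup\sigma_k\}_{i<k}$, and the gluing step via Proposition~\ref{prop2}(c). Your derivation that the offending vertex $x$ must lie in $\tau$ is a slight rephrasing of the paper's case analysis ($x\in\tau\cup\sigma_i$ or $x\in\tau\cup\sigma_k$, both faces), but the substance is identical.
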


\begin{proof}
We prove it by the induction on $k$.
If $k=1$, then it is trivial.
Suppose $k>1$. Let $X_1=\bigcup_{i=1}^{k-1}\st_X(\sigma_i)$ and $X_2=\st_X(\sigma_k)$.
By the induction hypothesis, both $X_1$ and $X_2$ are contractible.
To show $X_1 \cup X_2$ is contractible, it is enough to show $X_1 \cap X_2$ is contractible by Proposition~\ref{prop2} (c).
Note that $X_1 \cap X_2=\bigcup_{i=1}^{k-1}(\st_X(\sigma_i) \cap \st_X(\sigma_k))$.

\begin{claim}\label{claim1}
$\st_X(\sigma_i) \cap \st_X(\sigma_k)=\st_X(\sigma_i \cup \sigma_k)$ for each $i \in [k-1]$.
\end{claim}
\begin{proof}[Proof of Claim~\ref{claim1}]
We first show the easy direction that $\st_X(\sigma_i) \cap \st_X(\sigma_k) \supset \st_X(\sigma_i \cup \sigma_k)$.
If $\eta \in \st_X(\sigma_i \cup \sigma_k)$, then $\eta \cup \sigma_i \cup \sigma_k \in X$. 
Hence, we obtain $\eta \cup \sigma_i, \eta \cup \sigma_k \in X$, which is equivalent to $\eta \in \st_X(\sigma_i) \cap \st_X(\sigma_k)$.

Now for the opposite direction, take $\tau \in \st_X(\sigma_i) \cap \st_X(\sigma_k)$. 
We want to show $\tau \in \st_X(\sigma_i \cup \sigma_k)$, i.e. $\tau \cup \sigma_i \cup \sigma_k \in X$.
Suppose $\tau \cup \sigma_i \cup \sigma_k \not\in X$.
Then we can take a missing face $S$ of $X$ in $\tau \cup \sigma_i \cup \sigma_k$.
Note that $\tau \cup \sigma_i, \tau \cup \sigma_k \in X$ since $\tau \in \st_X(\sigma_i) \cap \st_X(\sigma_k)$.
Then $S \setminus (\tau \cup \sigma_i) \neq \varnothing$ and $S \setminus (\tau \cup \sigma_k) \neq \varnothing$.
Thus we can take $u \in S \setminus (\tau \cup \sigma_i) \subset \sigma_k \setminus \sigma_i$ and $w \in S \setminus (\tau \cup \sigma_k) \subset \sigma_i \setminus \sigma_k$.
Then by the assumption, $S$ must contain a vertex $x$ not in $\st_X(\sigma_i) \cup \st_X(\sigma_k)$.
Since $x \in S \subset \tau \cup \sigma_i \cup \sigma_k$, we obtain that either $x \in \tau \cup \sigma_i$ or $x \in \tau \cup \sigma_k$.
However, it implies that $x$ is a vertex of $\st_X(\sigma_i) \cup \st_X(\sigma_k)$ since $\tau \cup \sigma_i, \tau \cup \sigma_k \in X$.
Thus we reach a contradiction and this completes the proof.
\renewcommand{\qedsymbol}{$\blacksquare$}
\end{proof}

Now, by Claim~\ref{claim1}, we have $X_1 \cap X_2=\bigcup_{i=1}^{k-1} \st_X(\sigma_i \cup \sigma_k)$.
To use the induction hypothesis, we will show that $\sigma_1 \cup \sigma_k, \sigma_2 \cup \sigma_k, \ldots, \sigma_{k-1} \cup \sigma_k$ satisfy the assumption in the statement.
First, it is obvious that each $\sigma_i \cup \sigma_k$ is contained in $\sigma$ for $i \in [k-1]$.
Furthermore, if $u \in (\sigma_i \cup \sigma_k)\setminus(\sigma_j \cup \sigma_k) \subset \sigma_i\setminus\sigma_j$ and $w \in (\sigma_j \cup \sigma_k)\setminus(\sigma_i \cup \sigma_k) \subset \sigma_j\setminus\sigma_i$ for some $i,j \in [k-1]$, then all missing face containing both $u$ and $w$ have a vertex not in $\st_X(\sigma_i) \cup \st_X(\sigma_j)$, and also a vertex not in $\st_X(\sigma_i \cup \sigma_k) \cup \st_X(\sigma_j \cup \sigma_k)$ since $\st_X(\sigma_i \cup \sigma_k) \cup \st_X(\sigma_j \cup \sigma_k) \subset \st_X(\sigma_i) \cup \st_X(\sigma_j)$.
Thus, by the induction hypothesis, $X_1 \cap X_2$ is contractible.
Therefore,  Proposition~\ref{prop2} (c) implies that $X_1 \cup X_2=\bigcup_{i=1}^k \st(\sigma_i)$ is contractible.
\end{proof}

Now we are ready to give a proof of Theorem~\ref{thm:main}.
\begin{proof}[Proof of Theorem~\ref{thm:main}]
Let $K=\bigcup_{i=1}^k \st(\tilde{e}_i)$. First we show $I(H)=\st(v) \cup K$.
We obviously have $\st(v) \cup K \subset I(H)$, and hence it is sufficient to show that $I(H) \subset \st(v) \cup K$. Take $\sigma \in I(H)$.
If $\sigma \cup \{v\} \in I(H)$, then $\sigma \in \st(v)$.
Otherwise, if $\sigma \cup \{v\} \not\in I(H)$, then $\sigma \cup \{v\}$ contains an edge $e$ of $H$.
Since $\sigma \in I(H)$, the edge $e$ must contain the vertex $v$.
This implies $e = e_j$ for some $j \in [k]$, and it follows that $\tilde{e}_j \subset \sigma$.
Thus $\sigma \cup \tilde{e}_j=\sigma \in \st(\tilde{e}_j) \subset K$.
In both cases, we have $I(H) \subset \st(v) \cup K$.

Now, if we show that both $\st(v)$ and $K$ are contractible, then it implies $I(H) \simeq \Sigma(\st(v) \cap K)$ by Proposition~\ref{prop2} (c).
Clearly $\st(v)$ is contractible, so it is sufficient to show that $K$ is contractible.
This follows from an immediate application of Lemma~\ref{lem1} to $K=\bigcup_{i=1}^k \st(\tilde{e}_i)$.
It remains to check the the following two conditions to apply Lemma~\ref{lem1}:
\begin{enumerate}
    \item $\bigcup_{i=1}^k \tilde{e}_i \in I(H)$.
    \item If $u \in \tilde{e}_i\setminus\tilde{e}_j$ and $w \in \tilde{e}_j\setminus\tilde{e}_i$, then all missing face containing both $u$ and $w$ have a vertex not in $\st(\tilde{e}_i) \cup \st(\tilde{e}_j)$.
\end{enumerate}

To prove (1), assume $\bigcup_{i=1}^k \tilde{e}_i \not\in I(H)$.
Then there is $e \in H$ such that $e \subset \bigcup_{i=1}^k \tilde{e}_i \subset V(H)\setminus\{v\}$.
Recall that every edge in $H$ is inclusion-minimal, so $e \not\subset \tilde{e}_i$ for each $i \in [k]$.
Now we claim that there is an induced Berge cycle of length $3$ containing $v$, which contradicts the assumption.
Since $e \not\subset \tilde{e}_i$ for each $i \in [k]$ and $e \subset \bigcup_{i=1}^{k} \tilde{e}_i$, we can take two distinct index $i_1,i_2 \in [k]$ such that $e \cap (\tilde{e}_{i_1} \setminus \tilde{e}_{i_2}) \neq \varnothing$ and $e \cap (\tilde{e}_{i_2} \setminus \tilde{e}_{i_1}) \neq \varnothing$.
Take $u_1 \in e \cap (\tilde{e}_{i_1} \setminus \tilde{e}_{i_2})$ and $u_2 \in e \cap (\tilde{e}_{i_2} \setminus \tilde{e}_{i_1})$.
Note that $e \cap \{v,u_1,u_2\}=\{u_1,u_2\}$, ${e}_{i_1} \cap \{v,u_1,u_2\}=\{v,u_1\}$, and ${e}_{i_2} \cap \{v,u_1,u_2\}=\{v,u_2\}$.
Hence, $v~e_{i_1}~u_1~e~u_2~e_{i_2}$ is an induced Berge cycle of length $3$ containing $v$.
Therefore, we have $\bigcup_{i=1}^k \tilde{e}_i \not\in I(H)$.

Now we prove (2).
First, note that the missing faces of $I(H)$ are exactly the edges of $H$.
Suppose the contrary that there is a missing face $e$ of $I(H)$, i.e. an edge $e$ of $H$, containing both $u$ and $w$ such that all vertices in $e$ are contained in $\st(\tilde{e}_i) \cup \st(\tilde{e}_j)$.
Note that $v$ is not a vertex of $\st(\tilde{e}_i) \cup \st(\tilde{e}_j)$, hence we have $v \not\in e$.
Similarly as in the proof of (1), $v~e_{i}~u~e~w~e_{j}$ is an induced Berge cycle of length $3$ containing $v$.
This is a contradiction, thus shows (2).
\end{proof}

Now we recall Theorem~\ref{thm:cor}, which can be obtained from Theorem~\ref{thm:main}.
\begingroup
\def\thetheorem{\ref{thm:cor}}
\begin{theorem}
Let $H$ be a hypergraph and $v$ be a vertex of $H$ such that $v$ is not isolated and is not contained in an induced Berge cycle of length $3$.
\begin{enumerate}
    \item There is a hypergraph $H'$ with $V(H') \subset V(H)\setminus\{v\}$ such that $I(H) \simeq \Sigma I(H')$.
    \item $I(H)$ has the homotopy type of a suspension.
    \item $\tilde{H}_1(I(H))$ is a free abelian group.
\end{enumerate}
\end{theorem}
\addtocounter{theorem}{-1}
\endgroup
\begin{proof}
Let $e_1,e_2,\ldots,e_k$ be the edges of $H$ containing $v$ and $\tilde{e}_i=e_i \setminus \{v\}$.
By Theorem~\ref{thm:main}, we know $I(H) \simeq \Sigma (\st(v) \cap (\bigcup_{i=1}^{k} \st(\tilde{e}_i)))$.
Let $K=\st(v) \cap (\bigcup_{i=1}^{k} \st(\tilde{e}_i))$.
To prove (1), it is enough to see that $K$ is a simplicial complex defined on a subset of $V(H) \setminus \{v\}$.
This is obvious since each $\st(\tilde{e}_i)$ is a complex on $V(H) \setminus \{v\}$.
For (2), it immediately follows from (1).
For (3), since $I(H) \simeq \Sigma K$, we have $\tilde{H}_1(I(H)) \cong \tilde{H}_0(K)$ by Proposition~\ref{prop1}.
$\tilde{H}_0(K)$ is always a free abelian group, thus it implies that $\tilde{H}_1(I(H))$ is also a free abelian group.
\end{proof}

\section{Structural Condition for the Star cluster Theorem}\label{sec:structure}
Let $H$ be a hypergraph on $V$ and $v$ be a vertex of $H$ that is not isolated and not contained in an induced Berge cycle of length $3$.
Let $e_1,e_2,\ldots,e_k$ be the edges of $H$ containing $v$ and $\tilde{e}_i=e_i\setminus\{v\}$.
We define a new hypergraph, the \textit{star dissolution} $H_v$ at $v$ on $H$ as the set of missing faces of $st(v) \cap (\bigcup_{i=1}^k \st(\tilde{e}_i))$.
Then, by Theorem~\ref{thm:main}, we have $I(H) \simeq \Sigma I(H_v)$.
In this section, we provide a characterization of the edges of $H_v$.
It will explain why we call the hypergraph $H_v$ as ``star dissolution".
\begin{theorem}\label{thm:structure}
Let $H$ and $H_v$ be defined as above.
Then we have $\tilde{e}_1,\tilde{e}_2,\ldots,\tilde{e}_k \in H_v$.
Furthermore, $f \in H_v \setminus \{\tilde{e}_1,\tilde{e}_2,\ldots,\tilde{e}_k\}$ if and only if $\tilde{e}_i \not\subset f$ for all $i \in [k]$ and the following holds.
\begin{enumerate}[(i)]
    \item For each $i \in [k]$, there is an edge $h_i \in H \setminus \{e_1,e_2,\ldots,e_k\}$ such that $h_i \subset f \cup e_i$, and
    \item $f$ is inclusion-minimal subject to (i).
\end{enumerate}
\end{theorem}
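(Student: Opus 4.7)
The plan is to set $L := \st(v) \cap \bigcup_{i=1}^k \st(\tilde{e}_i)$ and observe that every face of $L$ avoids $v$, since $\{v\} \cup \tilde{e}_i = e_i \in H$ gives $\{v\} \notin \st(\tilde{e}_i)$; thus $L$ is a complex on $V(H) \setminus \{v\}$. The basic faceness criterion I will use is that for $f \subset V(H) \setminus \{v\}$, $f \in L$ if and only if (a) $f \cup \{v\} \in I(H)$ and (b) $f \cup \tilde{e}_j \in I(H)$ for some $j \in [k]$. To show $\tilde{e}_i \in H_v$: condition (a) fails at $\tilde{e}_i$ because $\tilde{e}_i \cup \{v\} = e_i$; and every proper subset $g \subsetneq \tilde{e}_i$ is a face, since any hypothetical edge inside $g \cup \{v\}$ or inside $\tilde{e}_i$ would be a proper subset of $e_i$, violating inclusion-minimality.

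The key equivalence is the following: whenever $v \notin f$, condition (b) fails for $f$ if and only if (i) holds. Indeed, any edge contained in $f \cup \tilde{e}_i$ automatically avoids $v$ and hence lies in $H \setminus \{e_1, \ldots, e_k\}$; conversely, any $h_i \in H \setminus \{e_1, \ldots, e_k\}$ with $h_i \subset f \cup e_i$ satisfies $h_i \subset f \cup \tilde{e}_i$ since $v \notin h_i$. Also, $\tilde{e}_i \not\subset f$ is automatic for any $f \in H_v$ with $f \notin \{\tilde{e}_1, \ldots, \tilde{e}_k\}$, because otherwise $\tilde{e}_j \subsetneq f$ would be a strictly smaller non-face, contradicting minimality. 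For the forward direction, I would take $f \in H_v \setminus \{\tilde{e}_1, \ldots, \tilde{e}_k\}$ and split on whether (a) or (b) fails at $f$: if (b) fails, (i) follows by the key equivalence; if (a) fails, then some edge $e \subset f \cup \{v\}$ exists, the hypothesis $\tilde{e}_i \not\subset f$ forces $v \notin e$ so $e \subset f$, and since $e$ itself is a non-face of $L$, minimality gives $f = e$, whence (i) holds uniformly via $h_i := f$ for every $i$. Condition (ii) then follows because any $g \subsetneq f$ is a face by minimality of $f$, so (b) holds for $g$, and the key equivalence forces (i) to fail for $g$.

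For the converse, assume $\tilde{e}_i \not\subset f$ together with (i) and (ii). The key equivalence gives that (b) fails at $f$, so $f$ is a non-face. To prove minimality, I would fix $g \subsetneq f$ and verify both (a) and (b) for $g$: (b) is equivalent to (i) failing at $g$, which is guaranteed by (ii); for (a), any edge $e \subset g \cup \{v\}$ must avoid $v$ (otherwise $\tilde{e}_j \subset g \subsetneq f$, forbidden by hypothesis), so $e \subset g \subsetneq f$, but then $e$ itself satisfies (i) via $h_i := e$, contradicting the minimality of $f$ subject to (i). Hence $g$ is a face, $f$ is a minimal non-face, and $f \notin \{\tilde{e}_1, \ldots, \tilde{e}_k\}$ since $\tilde{e}_i \not\subset f$. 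The main obstacle I anticipate is unifying the two failure modes of faceness, and in particular confirming that condition (a) failing at $f$ is absorbed into (i) via the universal choice $h_i := f$, and that this same universal choice is precisely what forbids (a) from failing at a proper subset $g$ once (ii) is in place.
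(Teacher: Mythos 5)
Your proof is correct. The substance is the same as the paper's --- both arguments ultimately reduce membership in $H_v$ to the statement ``for each $i$ there is an edge of $H\setminus\{e_1,\dots,e_k\}$ inside $f\cup e_i$, and $f$ is minimal with this property'' --- but your route is organized quite differently and is more self-contained. The paper factors the argument through the auxiliary hypergraphs $H_i=\{e\setminus e_i : e\in H\setminus\{e_1,\dots,e_k\}\}\cup\{\tilde{e}_1,\dots,\tilde{e}_k\}$, proving $I(H_i)=\st(v)\cap\st(\tilde{e}_i)$ (Lemma~\ref{lem2}), then invoking a general closure lemma for $I(F)=\bigcup_i I(F_i)$ (Lemma~\ref{lem3}) and a translation step (Claim~\ref{claim2}). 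You instead work directly with the faceness criterion for $L=\st(v)\cap\bigcup_i\st(\tilde{e}_i)$, namely (a) $f\cup\{v\}\in I(H)$ and (b) $f\cup\tilde{e}_j\in I(H)$ for some $j$, and your ``key equivalence'' (for $v\notin f$, the failure of (b) is exactly condition (i)) plays the combined role of Lemma~\ref{lem2} and Claim~\ref{claim2}, while the direct minimal-non-face analysis replaces Lemma~\ref{lem3}. The one genuinely delicate point in your version --- that the $\st(v)$ constraint (a) never produces extra missing faces, because a failure of (a) at a set avoiding all $\tilde{e}_i$ forces an edge $e\subset f$ with $v\notin e$, which is absorbed into (i) via the universal choice $h_i:=e$, and the same observation rules out (a) failing at proper subsets in the converse --- is handled correctly. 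What the paper's packaging buys is the reusable Lemma~\ref{lem3} and the identification $I(H_v)=\bigcup_i I(H_i)$, which it leans on again in Corollary~\ref{cor:structure}; what your version buys is a shorter, lemma-free proof of the theorem itself.
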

The proof will be given in Section~\ref{sec:4.1}.
Using the above characterization of the edges of $H_v$, we describe how to construct $H_v$ from the original hypergraph $H$.

\begin{corollary}\label{cor:structure}
$H_v$ can be obtained by the following process.
\begin{enumerate}
    \item Remove $v$ from each $e_i$, that is, instead of $e_i$ add $\tilde{e}_i=e_i\setminus\{v\}$ as a new element.
    \item All edges not containing $v$ remain the same.
    \item Take $f_i \in H \setminus \{e_1,e_2,\ldots,e_k\}$ such that $f_i \cap e_i \neq \varnothing$ for each $i \in [k]$. Let $f=\bigcup_{i=1}^{k} (f_i\setminus e_i)$.
    We add $f$ as a new element. 
    \item Let $H_v$ be the set of all inclusion-minimal elements among all elements obtained by (1)--(3).
\end{enumerate}
\end{corollary}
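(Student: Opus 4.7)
The plan is to derive the corollary directly from Theorem~\ref{thm:structure} by viewing steps~(1)--(3) as a constructive unpacking of conditions~(i) and~(ii) of that theorem, and step~(4) as the operation that extracts the inclusion-minimal non-faces. Let $\mathcal{S}$ denote the collection produced by steps~(1)--(3). The goal is to show that the inclusion-minimal elements of $\mathcal{S}$ coincide with the set $H_v$ characterized by Theorem~\ref{thm:structure}.

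For the forward direction, I would argue that every $f\in H_v$ lies in $\mathcal{S}$ and is inclusion-minimal there. The $\tilde{e}_i$'s are produced by step~(1); their inclusion-minimality in $\mathcal{S}$ uses the inclusion-minimality of edges of $H$, specifically the observation that $\bigcup_i(f_i\setminus e_i)\subset\tilde{e}_j$ would force $f_j\subset e_j$, a contradiction. For $f\in H_v\setminus\{\tilde{e}_1,\ldots,\tilde{e}_k\}$, condition~(i) produces witnesses $h_i\in H\setminus\{e_1,\ldots,e_k\}$ with $h_i\subset f\cup e_i$. A dichotomy arises: if some $h_i$ is disjoint from $e_i$, then $h_i\subset f$ and $h_i$ itself satisfies~(i) (take all $h_j:=h_i$), so~(ii) forces $h_i=f$ and $f$ is captured by step~(2); otherwise every $h_i\cap e_i\ne\varnothing$, and $f':=\bigcup_i(h_i\setminus e_i)\subset f$ still satisfies~(i), hence~(ii) gives $f'=f$, exhibiting $f$ as an output of step~(3).

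For the reverse direction, I would take an inclusion-minimal $f\in\mathcal{S}$ and verify $f\in H_v$. Every element of $\mathcal{S}$ satisfies~(i) tautologically, either by $h_i:=g$ for a step~(2) edge $g$ or $h_i:=f_i$ for a step~(3) element. The conditions $\tilde{e}_j\not\subset f$ and the minimality in~(ii) both follow from the minimality of $f$ in $\mathcal{S}$: a strictly smaller $f'\subsetneq f$ satisfying~(i) would, by the same dichotomy as above, place either a witness $h_i'\subset f'$ (from step~(2)) or $\bigcup_i(h_i'\setminus e_i)\subset f'$ (from step~(3)) strictly below $f$ in $\mathcal{S}$, contradicting minimality. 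The main obstacle I anticipate is keeping this dichotomy tight; it is exactly the boundary between steps~(2) and~(3), and minor degenerate overlaps (a step~(3) element coinciding with some $\tilde{e}_j$ or with a step~(2) edge) must be noted as harmless, since step~(4) only retains inclusion-minimal elements.
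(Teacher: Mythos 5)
Your proposal is correct and follows essentially the same route as the paper's proof: both directions hinge on the same dichotomy (whether some witness $h_i$ is disjoint from $e_i$, routing $f$ to step~(2) or step~(3)) together with the characterization in Theorem~\ref{thm:structure}. The only cosmetic difference is that the paper phrases the converse as ``every produced element contains an edge of $H_v$'' rather than arguing directly about minimal elements of $\mathcal{S}$, which amounts to the same thing.
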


For instance, in Figure~\ref{fig:sc_ex1}, we obtain more than $1$ new element at the step (3) and we may ignore singleton edges.
In Figure~\ref{fig:sc_ex2}, we apply the above process for a $3$-uniform hypergraph.

\begin{figure}[htbp]
    \centering
    \includegraphics[scale=1.6]{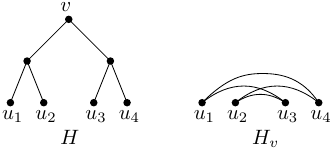}
    \caption{By applying Theorem~\ref{thm:structure} to $H$ and $v$, we obtain that $H_v$ is isomorphic to $K_{2,2}$.}
    \label{fig:sc_ex1}
\end{figure}

\begin{figure}[htbp]
    \centering
    \includegraphics[scale=1.4]{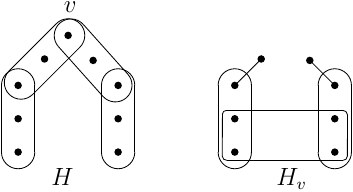}
    \caption{We obtain $H_v$ by applying Theorem~\ref{thm:structure} for a $3$-uniform hypergraph $H$ and $v$.}
    \label{fig:sc_ex2}
\end{figure}

\begin{proof}[Proof of Corollary~\ref{cor:structure}]
We first need to check that all edges of $H_v$ appear during the process (1)--(3).
Obviously, $\tilde{e}_1,\ldots,\tilde{e}_k \in H_v$ appear in the step (1).
By Theorem~\ref{thm:structure}, all other edges in $f \in H_v \setminus \{\tilde{e}_1,\ldots,\tilde{e}_k\}$ satisfies that
\begin{enumerate}[(i)]
    \item For each $i \in [k]$, there is an edge $h_i \in H \setminus \{e_1,e_2,\ldots,e_k\}$ such that $h_i \subset f \cup e_i$, and
    \item $f$ is inclusion-minimal subject to (i).
\end{enumerate}
Suppose $h_i \cap e_i = \varnothing$ for some $i \in [k]$.
Then $h_i \subset f \cup e_i$ implies $h_i \subset f$.
In addition, since $h_i \subset h_i \cup e_j$ for all $j \in [k]$, we have $h_i=f$ by the minimality of $f$.
Thus, in this case, we obtain $f =h_i \in H \setminus \{e_1,e_2,\ldots,e_k\}$.
Hence, $f$ appears in the step (2).

Thus assume $h_i \cap e_i \neq \varnothing$ for all $i \in [k]$.
Then define $h=\bigcup_{i=1}^{k}(h_i \setminus e_i)$.
For each $i \in [k]$, we have $h_i \setminus e_i \subset f$ since $h_i \subset f \cup e_i$, hence it implies $h \subset f$.
On the other hand, since $h_i \subset h \cup e_i$ for all $i \in [k]$, we obtain $h=f$ by the minimality of $f$.
Thus, $f=h=\bigcup_{i=1}^{k}(h_i \setminus e_i)$ appears in the step (3).

Now it remains to check that all elements appearing during the process (1)--(3) contain some edge of $H_v$.
Let $e$ be an element appearing during the process (1)--(3).
\begin{itemize}
    \item If $e$ appears in the step (1), then $e=\tilde{e}_i$ for some $i$ and we know $e=\tilde{e}_i \in H_v$ by Theorem~\ref{thm:structure}.
    \item If $e$ appears in the step (2), then $e \in H\setminus\{e_1,\ldots,e_k\}$.
    In this case, we have $\tilde{e}_i \not\subset e$ and $e$ satisfies the condition (i) of Theorem~\ref{thm:structure} since $e \subset e \cup e_i$ for all $i \in [k]$.
    Thus by Theorem~\ref{thm:structure}, $e$ contains some edge of $H_v$.
    \item If $e$ appears in the step (3), then $e=\bigcup_{i=1}^{k}(f_i \setminus e_i)$ for some $f_i \in H\setminus\{e_1,\ldots,e_k\}$ such that $f_i \cap e_i \neq \varnothing$.
    Then $e$ satisfies the condition (i) of Theorem~\ref{thm:structure} since $f_i \subset e \cup e_i$ for each $i \in [k]$, and hence $e$ contains some edge of $H_v$ by Theorem~\ref{thm:structure}.
\end{itemize}
This completes the proof.
\end{proof}

Based on Corollary~\ref{cor:structure}, Theorem~\ref{thm:main} can be applied to compute the homotopy type of the independence complex of hypergraphs.
Here, we introduce one interesting application of Theorem~\ref{thm:main}.

Let $P_{n,k}$ be a \textit{$k$-uniform tight path} on $n$ vertices, that is, the edges of $P_{n,k}$ are the set of $k$ consecutive vertices of the vertices $v_1,v_2,\ldots,v_n$ of $P_{n,k}$.
Let $H$ be a hypergraph containing $P_{2k-2,k}$ defined on $a_1,a_2,\ldots,a_{2k-2}$ as a subgraph.
Let $\mathcal{L}_k(H)$ be the hypergraph obtained from $H$ by deleting all edges of $P_{2k-2,k}$ on $a_1,a_2,\ldots,a_{2k-2}$ and adding new vertices $b_1,b_2,\ldots,b_{k+1}$ and new edges of $P_{3k-1,k}$ on $a_1,\ldots,a_{k-1},b_1,\ldots,b_{k+1},a_k,\ldots,a_{2k-2}$.
Note that $H$ can have edges on $a_1,a_2,\ldots,a_{2k-2}$ incomparable with any edge of $P_{2k-2,k}$.
For instance, see Figure~\ref{fig:tightpath} when $k=3$.

\medskip
\begin{figure}[htbp]
    \centering
    \includegraphics[scale=1.4]{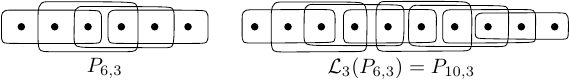}
    \caption{For a $3$-uniform tight path $P_{6,3}$, we obtain $\mathcal{L}_3(P_{6,3})=P_{10,3}$.}
    \label{fig:tightpath}
\end{figure}

If $H$ is a graph, then $k=2$ and $P_{2k-2,k}=P_{2,2}$ is a path of length $1$ on $a_1,a_2$ and $\mathcal{L}_2(H)$ is obtained from $H$ by replacing an edge $a_1a_2$ by a path of length $4$.
As mentioned in the introduction, Csorba~\cite{Csorba} showed that $I(\mathcal{L}_2(H)) \simeq \Sigma I(H)$.
The following theorem generalizes Csorba's result.
\begin{theorem}\label{thm:tightpath}
Let $H$ and $\mathcal{L}_k(H)$ be defined as above.
Then $I(\mathcal{L}_k(H)) \simeq \Sigma^{k-1}I(H)$.
\end{theorem}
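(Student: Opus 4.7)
The plan is to apply Theorem~\ref{thm:main} iteratively to the vertices $b_{k+1}, b_k, \ldots, b_3$ of $\mathcal{L}_k(H)$ in order, each application producing one suspension; after the $k-1$ applications we will be left with a hypergraph isomorphic to $H$ (under the natural renaming $b_1 \leftrightarrow a_k$), giving the claim. The argument may equivalently be phrased as an induction on $k$, with base case $k=2$ being Csorba's theorem, a direct application of Theorem~\ref{thm:main} to the middle vertex of the inserted path.

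The first task at each iteration is to verify the no-induced-Berge-$3$-cycle hypothesis of Theorem~\ref{thm:main}. For $v = b_{k+1}$ in $\mathcal{L}_k(H)$, the $k$ edges containing $b_{k+1}$ are the tight-path edges at the right end of $P_{3k-1,k}$. In a putative induced $3$-cycle $b_{k+1}\, e_\alpha\, u\, f\, w\, e_\beta$, the bridging edge $f$ must contain $u$ and $w$ but avoid $b_{k+1}$. Since every newly-inserted $b$-vertex appears only in tight-path edges (no edge of $H\setminus P_{2k-2,k}$ meets the new vertices), whenever $u$ or $w$ is a $b$-vertex the bridging edge $f$ must also be a tight-path edge; a direct index calculation on the positions of $u, w$ in $P_{3k-1,k}$ shows that the admissible positions for $u$ and $w$ are separated by more than $k$ indices with $b_{k+1}$ between them, so no tight-path edge of span $k$ can contain both $u$ and $w$ while avoiding $b_{k+1}$. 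The case where both $u, w$ are among $a_k, \ldots, a_{2k-2}$ is excluded by the same spacing argument.

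Corollary~\ref{cor:structure} then yields $I(\mathcal{L}_k(H)) \simeq \Sigma I(H_1)$, where $H_1$ consists of the unchanged left-side tight-path edges of $P_{3k-1,k}$ (those not containing $b_{k+1}$, after deleting any rendered non-minimal), the shortened right-side edges $e\setminus\{b_{k+1}\}$ (which form a tight $(k-1)$-uniform path on $b_2, \ldots, b_k, a_k, \ldots, a_{2k-2}$), the original edges of $H\setminus P_{2k-2,k}$, and possibly bridging edges produced by step~(3) of the corollary. I then apply Theorem~\ref{thm:main} to $v = b_k$ in $H_1$, verifying the no-$3$-cycle condition analogously, and iterate through $b_{k-1}, b_{k-2}, \ldots, b_3$. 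At each step the right-side tight path shortens by one vertex and its uniformity drops by one.

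After $k-1$ iterations, the right-side structure has collapsed entirely: certain vertices, notably $b_2$ and $a_k$, arise as singleton edges of the resulting hypergraph and (by the convention that a singleton edge $\{v\}$ is equivalent to deleting $v$) drop out. Under the identification $b_1 \leftrightarrow a_k$, the remaining hypergraph is isomorphic to $H$. Composing the $k-1$ suspension equivalences gives $I(\mathcal{L}_k(H)) \simeq \Sigma^{k-1} I(H)$. The main obstacle is the detailed combinatorial bookkeeping of the intermediate hypergraphs: tracking which edges produced by step~(3) of Corollary~\ref{cor:structure} are inclusion-minimal and how they interact with the progressively shrinking right-side tight path, and systematically checking the no-induced-Berge-$3$-cycle hypothesis at each iteration.
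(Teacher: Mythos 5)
Your overall strategy is the paper's strategy: apply Theorem~\ref{thm:main} (via Corollary~\ref{cor:structure}) to $k-1$ of the inserted vertices in succession, describe the intermediate hypergraphs, and show the terminal one equals $H$ together with two singleton edges. (The paper processes $b_k,b_{k-1},\dots,b_2$; you process $b_{k+1},b_k,\dots,b_3$.) The gap is that the entire technical content of the proof is the part you set aside as ``detailed combinatorial bookkeeping'': the paper's Claim~\ref{claim3} gives an explicit inductive description of every edge of every intermediate hypergraph $F_i$ — the surviving left-side windows, the shrinking right-side tight path, and, crucially, the exact list of step-(3) bridging edges together with which elements become non-minimal — and this description is what simultaneously certifies the no-induced-Berge-$3$-cycle hypothesis at each iteration and yields $I(F_{k-1})=I(H)$ at the end. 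Naming that as ``the main obstacle'' and stopping means the proof has not been given.

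Moreover, your shifted vertex order makes that bookkeeping strictly harder, and your partial description of the intermediate hypergraphs is not consistent with your claimed endpoint. Processing $b_{k+1},\dots,b_3$ leaves $b_2$ and the \emph{original} vertex $a_k$ as the final singletons, so $a_k$ is deleted and $b_1$ must take over its role. But $H$ may have edges outside $P_{2k-2,k}$ containing $a_k$ (the setup explicitly allows extra edges on $a_1,\dots,a_{2k-2}$, and $H$ can have arbitrary further edges through $a_k$). You describe the intermediate hypergraph as containing ``the original edges of $H\setminus P_{2k-2,k}$'' unchanged plus ``possibly bridging edges''; if that were the whole story, an edge such as $\{a_k,c\}$ would still contain $a_k$ at the end, be annihilated by the singleton $\{a_k\}$, and never reappear on $b_1$, so the final complex would not be $I(H)$. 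In fact step (3) of Corollary~\ref{cor:structure} does manufacture the needed substitutes (e.g.\ $\{b_1,c\}$), but producing and minimality-checking exactly these is the bookkeeping you skipped, and these new edges (which can contain $b_1$ and even $b_2$) also become candidate ``third edges'' of Berge triangles at later iterations — so your spacing argument, which only rules out tight-path edges as the bridging edge, is insufficient beyond the first step. The paper's order $b_k,\dots,b_2$ avoids all of this: only the auxiliary vertices $b_1,b_{k+1}$ become singletons, the surviving edges on $a_1,\dots,a_{2k-2}$ are verbatim those of $H$, and no relabeling is required.
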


\begin{proof}
We prove by applying the star cluster theorem to $\mathcal{L}_k(H)$ for vertices $b_k,b_{k-1},\ldots,b_2$ in order.
Let $F_1=\mathcal{L}_k(H)_{b_k}$ and $F_i=(F_{i-1})_{b_{k-i+1}}$ for $2 \leq i \leq k-1$.
Then $F_i$ is the hypergraph obtained by applying the star cluster theorem to $\mathcal{L}_k(H)$ for vertices $b_k,b_{k-1},\ldots,b_{k-i+1}$ in order.

\begin{claim}\label{claim3}
For each $i \in [k-1]$, the edges of $F_i$ induced by $a_1,a_2,\ldots,a_{2k-2},b_1,b_2,\ldots,b_{k-i},b_{k+1}$ are given as follows:
\begin{enumerate}
    \item[a)] All edges of $P_{2k-2i,k-i}$ on $b_1,b_2,\ldots,b_{k-i},b_{k+1},a_k,\ldots,a_{2k-i-2}$ are edges of $F_i$.
    \item[b)] $\{a_j,a_{j+1}\ldots,a_{k-1},b_1,b_2,\ldots,b_j\} \in F_i$ for $1 \leq j \leq k-i-1$.
    \item[c)] $\{a_{\ell},\ldots,a_{k-1},b_1,\ldots,b_m,a_{k+m},\ldots,a_{k+\ell-1}\}\in F_i$ for $k-i \leq \ell \leq k-1$, $0 \leq m \leq k-i-1$. Note that we obtain $\{a_{\ell},a_{\ell+1},\ldots,a_{k+\ell-1}\}$ when $m=0$.
    \item[d)] All other edges on $a_1,a_2,\ldots,a_{2k-2}$ in $H$ incomparable with any edge of $P_{2k-2,k}$.
\end{enumerate}
\end{claim}
\begin{proof}[Proof of Claim~\ref{claim3}]
We prove by the induction on $i$ using Theorem~\ref{thm:structure} and Corollary~\ref{cor:structure}.
For $i=1$, the edges of $F_1$ induced by $a_1,a_2,\ldots,a_{2k-2},b_1,b_2,\ldots,b_{k-1},b_{k+1}$ are given as follows: 
We first consider the edges $e_1,\ldots,e_k$ of $\mathcal{L}_k(H)$ containing $b_k$. Let $e_1=\{b_1,\ldots,b_k\}$, $e_2=\{b_2,\ldots,b_{k+1}\}$, and $e_j=\{b_j,\ldots,b_{k+1},a_k,\ldots,a_{k+j-3}\}$ for $3 \leq j \leq k$.
\begin{itemize}
    \item In the step (1), we obtain $\tilde{e}_1,\ldots,\tilde{e}_k$, i.e. the edges of $P_{2k-2,k-1}$ on $b_1,\ldots,b_{k-1},b_{k+1},a_k,\ldots,a_{2k-3}$.
    \item In the step (2), we obtain $\{a_j,\ldots,a_{k-1},b_1,\ldots,b_j\}$ for $1 \leq j \leq k-1$ and all other edges on $a_1,a_2,\ldots,a_{2k-2}$ in $H$ incomparable with any edge of $P_{2k-2,k}$.
    \item In the step (3), we obtain $\{a_{k-1},b_1,\ldots,b_m,a_{k+m},\ldots,a_{2k-2}\}$ for $0 \leq m \leq k-2$ by taking $f_j=\{a_{k-1},b_1,\ldots,b_k\}$ for $1 \leq j \leq m+1$ and $f_j=\{b_{k+1},a_k,\ldots,a_{2k-2}\}$ for $m+2 \leq j \leq k$. 
\end{itemize} 
Note that $\{a_{k-1},b_1,\ldots,b_{k-1}\}$ is not minimal since we obtained $\{b_1,\ldots,b_{k-1}\}$ in the step (1), and all other elements obtained from (1)--(3) is minimal, hence we obtain that the statement holds when $i=1$.

Now suppose $i>1$ and $F_{i-1}$ satisfies the statement, that is, the edges of $F_{i-1}$ induced by \\
$a_1,a_2,\ldots,a_{2k-2},b_1,b_2,\ldots,b_{k-i+1},b_{k+1}$ are given as follows:
\begin{enumerate}
    \item[a)] All edges of $P_{2k-2i+2,k-i+1}$ on $b_1,b_2,\ldots,b_{k-i+1},b_{k+1},a_k,\ldots,a_{2k-i-1}$ are edges of $F_{i-1}$.
    \item[b)] $\{a_j,a_{j+1}\ldots,a_{k-1},b_1,b_2,\ldots,b_j\} \in F_{i-1}$ for $1 \leq j \leq k-i$.
    \item[c)] $\{a_{\ell},\ldots,a_{k-1},b_1,\ldots,b_m,a_{k+m},\ldots,a_{k+\ell-1}\}\in F_{i-1}$ for $k-i+1 \leq \ell \leq k-1$, $0 \leq m \leq k-i$.
    \item[d)] All other edges on $a_1,a_2,\ldots,a_{2k-2}$ in $H$ incomparable with any edge of $P_{2k-2,k}$.
\end{enumerate}
By Theorem~\ref{thm:structure} and Corollary~\ref{cor:structure}, we obtain the edges of $F_i=(F_{i-1})_{b_{k-i+1}}$ induced by $a_1,a_2,\ldots,a_{2k-2}$, $b_1,b_2,\ldots,b_{k-i},b_{k+1}$ are given as follows: 
Consider the edges $e_1,\ldots,e_{k-i+1}$ of $F_{i-1}$ containing $b_{k-i+1}$.
Let $e_1=\{b_1,\ldots,b_{k-i+1}\}$, $e_2=\{b_2,\ldots,b_{k-i+1},b_{k+1}\}$ and $e_j=\{b_j,\ldots,b_{k-i+1},b_{k+1},a_k,\ldots,a_{k+j-3}\}$ for $3 \leq j \leq k-i+1$.
\begin{itemize}
    \item In the step (1), we obtain $\tilde{e}_1,\ldots,\tilde{e}_{k-i+1}$, i.e. the edges of $P_{2k-2i,k-i}$ on $b_1,b_2,\ldots,b_{k-i},b_{k+1}$, $a_k,\ldots,a_{2k-i-2}$.
    \item In the step (2), we obtain $\{a_j,\ldots,a_{k-1},b_1,\ldots,b_j\}$ for $1 \leq j \leq k-i$, \\
    $\{a_{\ell},\ldots,a_{k-1},b_1,\ldots,b_m,a_{k+m},\ldots,a_{k+\ell-1}\}$ for $k-i+1 \leq \ell \leq k-1$, $0 \leq m \leq k-i$, and\\
    all other edges on $a_1,a_2,\ldots,a_{2k-2}$ in $H$ incomparable with any edge of $P_{2k-2,k}$.
    \item In the step (3), we obtain $\{a_{k-i},\ldots,a_{k-1},b_1,\ldots,b_m,a_{k+m},\ldots,a_{2k-i-1}\}$ for $0 \leq m \leq k-i-1$ by taking $f_j=\{a_{k-i},\ldots,a_{k-1},b_1,\ldots,b_{k-i}\}$ for $1 \leq j \leq m+1$ and $f_j=\{b_{k+1},a_k,\ldots,a_{2k-i-1}\}$ for $m+2 \leq j \leq k-i+1$.
\end{itemize}
Note that $\{a_{\ell},\ldots,a_{k-1},b_1,\ldots,b_{k-i},a_{2k-i},\ldots,a_{k+\ell-1}\}$ for each $k-i \leq \ell \leq k-1$ is not minimal since we obtained $\{b_1,\ldots,b_{k-i}\}$ in the step (1), and all other elements obtained from (1)--(3) is minimal. 
Thus we can conclude that $F_i$ also satisfies the statement.
\renewcommand{\qedsymbol}{$\blacksquare$}
\end{proof}
By Claim~\ref{claim3}, we obtain that the edges of $F_{k-1}$ on $a_1,a_2,\ldots,a_{2k-2},b_1,b_{k+1}$ is given as follows:
\begin{enumerate}
    \item[a)] All edges of $P_{2,1}$ on $b_1,b_{k+1}$ are edges of $F_{k-1}$, that is, $\{b_1\},\{b_{k+1}\} \in F_{k-1}$.
    \item[c)] $\{a_{\ell},a_{\ell+1},\ldots,a_{k+\ell-1}\} \in F_{k-1}$ for $1 \leq \ell \leq k-1$. 
    \item[d)] All other edges on $a_1,a_2,\ldots,a_{2k-2}$ in $H$ incomparable with any edge of $P_{2k-2,k}$.
\end{enumerate}
Thus we obtain that $F_{k-1}=H \cup \{\{b_1\},\{b_{k+1}\}\}$.
Since $b_1,b_{k+1}$ are forming singleton edges, they are not appear in $I(F_{k-1})$.
Hence, we can conclude that $I(F_{k-1})=I(H)$.
Now, by Theorem~\ref{thm:main}, we obtain  $$I(\mathcal{L}_k) \simeq \Sigma I(F_1) \simeq \Sigma^{k-1} F_{k-1} \simeq \Sigma^{k-1} I(H)$$
as we wanted.
\end{proof}

\subsection{Proof of Theorem~\ref{thm:structure}}\label{sec:4.1}
For each $i \in [k]$, define the hypergraph $H_i$ on $V\setminus\{v\}$ as \[H_i=\{e \setminus e_i : e \in H\setminus\{e_1,e_2,\ldots,e_k\}\} \cup \{\tilde{e}_1,\tilde{e}_2,\ldots,\tilde{e}_k\}.\]
Note that edges in $H_i$ may not be inclusion-minimal.

\begin{lemma}\label{lem2}
Let $H$ and $H_i$ be defined as above. Then $I(H_i)=\st(v) \cap \st(\tilde{e}_i)$.
\end{lemma}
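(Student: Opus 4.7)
The plan is to check the set-theoretic equality of the two complexes by unfolding the definitions and verifying that both sides consist of precisely the same subsets of $V \setminus \{v\}$. The key observation will be that the edge-condition defining membership in $\st(\tilde{e}_i)$ is strictly stronger than a corresponding piece of the condition defining $\st(v)$, so the two combine cleanly into the independence condition for $H_i$.

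I would first note that every $\tau \in \st(v) \cap \st(\tilde{e}_i)$ satisfies $v \notin \tau$: otherwise $\tau \cup \tilde{e}_i \supset \{v\} \cup \tilde{e}_i = e_i$, and then $\tau \cup \tilde{e}_i$ would contain the edge $e_i$, contradicting $\tau \cup \tilde{e}_i \in I(H)$. Hence both $\st(v) \cap \st(\tilde{e}_i)$ and $I(H_i)$ are contained in $2^{V \setminus \{v\}}$, and it suffices to characterize membership for $\tau \subset V \setminus \{v\}$.

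For such a $\tau$, I would translate the conditions $\tau \cup \{v\} \in I(H)$ and $\tau \cup \tilde{e}_i \in I(H)$ edge by edge. An edge $e_j$ (containing $v$) is contained in $\tau \cup \{v\}$ precisely when $\tilde{e}_j \subset \tau$, while it can never be contained in $\tau \cup \tilde{e}_i$ since $v$ is absent from the latter. An edge $e \in H \setminus \{e_1,\ldots,e_k\}$ (which does not contain $v$) is contained in $\tau \cup \{v\}$ precisely when $e \subset \tau$, and it is contained in $\tau \cup \tilde{e}_i$ precisely when $e \setminus e_i \subset \tau$ (using $v \notin e$ to identify $e \setminus \tilde{e}_i$ with $e \setminus e_i$). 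Collating these cases, $\tau \in \st(v) \cap \st(\tilde{e}_i)$ is equivalent to the conjunction of: $\tilde{e}_j \not\subset \tau$ for every $j \in [k]$; $e \not\subset \tau$ for every $e \in H \setminus \{e_1,\ldots,e_k\}$; and $e \setminus e_i \not\subset \tau$ for every such $e$.

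Finally, unfolding $\tau \in I(H_i)$ directly from the given presentation of $H_i$ yields exactly $\tilde{e}_j \not\subset \tau$ for every $j \in [k]$ together with $e \setminus e_i \not\subset \tau$ for every $e \in H \setminus \{e_1,\ldots,e_k\}$. Since $e \setminus e_i \subset e$, the last condition already forces $e \not\subset \tau$, so the apparently extra middle clause in the characterization of $\st(v) \cap \st(\tilde{e}_i)$ is redundant, and the two membership criteria coincide. The argument is essentially a bookkeeping exercise; the only subtlety is carefully separating the roles played by the edges $e_j$ and by the edges $e \in H\setminus\{e_1,\ldots,e_k\}$, and keeping track of how $v$ is added or removed on each side.
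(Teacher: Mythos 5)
Your proof is correct and takes essentially the same approach as the paper: both arguments are a direct unwinding of the definitions, with your single chain of membership equivalences replacing the paper's two containments-by-contradiction. The redundancy you note (that $e\setminus e_i \not\subset \tau$ forces $e \not\subset \tau$) is exactly the point the paper's argument uses implicitly.
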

\begin{proof}
First we show $I(H_i) \subset \st(v) \cap \st(\tilde{e}_i)$.
Take $\sigma \in I(H_i)$.
Suppose $\sigma \cup \{v\} \not\in I(H)$.
Then there is $e \in H$ such that $e \subset \sigma \cup \{v\}$. 
If $e \neq e_j$ for all $j \in [k]$, then $e\setminus e_i \subset (\sigma \cup \{v\})\setminus e_i \subset \sigma$. Since $e\setminus e_i$ is an edge of $H_i$, this is a contradiction to $\sigma \in I(H_i)$.
If $e=e_j$ for some $j \in [k]$, then we have $\tilde{e}_j \subset \sigma$.
Similarly as above, we again obtain an edge $\tilde{e}_j$ of $H_i$ contained in $\sigma$, which is a contradiction to $\sigma \in I(H_i)$.
This implies $\sigma \cup \{v\} \in I(H)$, that is, $\sigma \in \st(v)$.
Next, suppose $\sigma \cup \tilde{e}_i \not\in I(H)$.
Then there is $e \in H$ such that $e \subset \sigma \cup \tilde{e}_i$.
Note that $e \neq e_j$ for all $j \in [k]$ since $\sigma, \tilde{e}_i \subset V\setminus \{v\}$.
Thus we have $e \setminus e_i \subset (\sigma\cup\tilde{e}_i)\setminus e_i \subset \sigma$.
Since $e \setminus e_i \in H_i$, it is a contradiction to $\sigma \in I(H_i)$, implying $\sigma \in \st(\tilde{e}_i)$.
Therefore, we have $I(H_i) \subset \st(v) \cap \st(\tilde{e}_i)$.

Now we prove $I(H_i) \supset \st(v) \cap \st(\tilde{e}_i)$.
Take $\sigma \in \st(v) \cap \st(\tilde{e}_i)$.
First, note that $\sigma$ does not contain $v$ since $v \notin \st(\tilde{e}_i)$.
Also, since $\sigma \in \st(v)$ and no $\tilde{e}_j$ is contained in $\st(v)$, we have $\tilde{e}_j \not\subset \sigma$ for all $j \in [k]$.
Now suppose $\sigma \notin I(H_i)$.
Then there is $e \in H_i$ such that $e \subset \sigma$.
Observing $e \neq \tilde{e}_j$ for all $j \in [k]$, it must be $e=f\setminus e_i$ for some $f \in H\setminus\{e_1,e_2,\ldots,e_k\}$.
Then $f \subset e\cup\tilde{e}_i \subset \sigma \cup \tilde{e}_i$, and this is a contradiction to $\sigma \in \st(\tilde{e}_i)$.
Thus, we have $\sigma \in I(H_i)$ and hence we conclude that $I(H_i) \supset \st(v) \cap \st(\tilde{e}_i)$.
This completes the proof.
\end{proof}

For a hypergraph $F$, let $\cl(F)$ be the \textit{closure} of $F$ and define it as
\[\cl(F)=\{e \subset V: f \subset e \text{ for some }f \in F\}.\]
Then we have $\cl(F)=2^{V} \setminus I(F)$ for any hypergraph $F$.
If all edges of $F$ are inclusion-minimal, then $f \in F$ if and only if $f$ is inclusion-minimal in $\cl(F)$.

\begin{lemma}\label{lem3}
Let $F,F_1,F_2,\ldots,F_m$ be hypergraphs on $V$ and $I(F)=\bigcup_{i=1}^{m} I(F_i)$. 
Assume that all edges in $F$ are inclusion-minimal.
Then $f \in F$ if and only if the following holds.
\begin{enumerate}
    \item There is an edge $e_i \subset f$ of $F_i$ for each $i \in [m]$, and
    \item $f$ is inclusion-minimal subject to (1).
    \end{enumerate}
\end{lemma}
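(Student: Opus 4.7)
The plan is to translate both sides of the equivalence into statements about the non-faces of $I(F)$ and then apply the hypothesis $I(F) = \bigcup_{i=1}^m I(F_i)$. The key opening observation is that, because every edge of $F$ is inclusion-minimal, we have $f \in F$ if and only if $f$ is an inclusion-minimal subset of $V$ lying outside $I(F)$ (this is the characterisation already recorded before the lemma: $F$ is exactly the set of minimal elements of $\cl(F) = 2^V \setminus I(F)$). On the other hand, $\sigma \notin I(F)$ is, by the hypothesis, equivalent to $\sigma \notin I(F_i)$ for every $i \in [m]$, which by the definition of independence complex means that $\sigma$ contains some edge of $F_i$ for each $i$. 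This last statement is precisely condition (1), so the lemma reduces to matching up two descriptions of ``inclusion-minimal non-face of $I(F)$''.

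For the forward direction I would take $f \in F$. Then $f \notin I(F)$, so by the translation above $f$ satisfies (1). To see (2), note that any proper subset $f' \subsetneq f$ must lie in $I(F)$, for otherwise $f$ would not be an inclusion-minimal edge of $F$; hence $f'$ cannot contain an edge of every $F_i$, so $f'$ does not satisfy (1). For the backward direction I would assume (1) and (2). Condition (1) gives $f \notin I(F_i)$ for each $i$, so $f \notin I(F)$, and thus some edge $g \in F$ is contained in $f$. Applying the already-established forward direction to $g \in F$ shows that $g$ itself satisfies (1); condition (2) for $f$ then forces $g = f$, so $f \in F$.

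The only real pitfall will be bookkeeping with negations and making sure the inclusion-minimality of edges in $F$ is invoked exactly at the point where ``$f \in F$'' is turned into ``$f$ is an inclusion-minimal non-face of $I(F)$''. There is no need for Mayer--Vietoris, homotopy, or induction here; the statement is a purely combinatorial identity about how the minimal non-faces of a union of down-closed families are obtained from the edges of the constituent hypergraphs.
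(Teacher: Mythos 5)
Your proof is correct and takes essentially the same route as the paper's: both translate ``$f \in F$'' into ``$f$ is an inclusion-minimal element of $2^V \setminus I(F) = \bigcap_{i=1}^m \cl(F_i)$'' and then identify membership in each $\cl(F_i)$ with condition (1). The only difference is that you spell out the minimality bookkeeping in both directions, which the paper leaves implicit.
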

\begin{proof}
First, we know that $I(F)=\bigcup_{i=1}^{m} I(F_i)$ implies 
\[\cl(F)=2^V \setminus I(F)=2^V \setminus \bigcup_{i=1}^{m} I(F_i) =\bigcap_{i=1}^{m} 2^V \setminus I(F_i) =\bigcap_{i=1}^{m} \cl(F_i).\] 

Thus, we obtain that $f \in F$ if and only if $f$ is inclusion-minimal in $\bigcap_{i=1}^{m}\cl(F_i)$.
Thus it is enough to show that every element $f$ in $\bigcap_{i=1}^{m}\cl(F_i)$ satisfies the condition that there is an edge $e_i \subset f$ of $F_i$ for each $i \in [m]$, and this directly follows from the definition of $\cl(F_i)$.
\end{proof}

Now we prove Theorem~\ref{thm:structure}.
\begingroup
\def\thetheorem{{\ref{thm:structure}}}
\begin{theorem}
We have $\tilde{e}_1,\tilde{e}_2,\ldots,\tilde{e}_k \in H_v$.
Furthermore, $f \in H_v \setminus \{\tilde{e}_1,\tilde{e}_2,\ldots,\tilde{e}_k\}$ if and only if $\tilde{e}_i \not\subset f$ for all $i \in [k]$ and the following holds.
\begin{enumerate}[(i)]
    \item For each $i \in [k]$, there is an edge $h_i \in H \setminus \{e_1,e_2,\ldots,e_k\}$ such that $h_i \subset f \cup e_i$, and
    \item $f$ is inclusion-minimal subject to (i).
\end{enumerate}
\end{theorem}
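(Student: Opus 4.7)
The plan is to combine Lemma~\ref{lem2} with Lemma~\ref{lem3} and then simply translate the abstract conditions into the concrete conditions (i) and (ii). Specifically, Lemma~\ref{lem2} gives
\[
I(H_v) \;=\; \st(v) \cap \bigcup_{i=1}^{k}\st(\tilde{e}_i) \;=\; \bigcup_{i=1}^{k}\bigl(\st(v)\cap\st(\tilde{e}_i)\bigr) \;=\; \bigcup_{i=1}^{k} I(H_i),
\]
so Lemma~\ref{lem3} (applied with $F = H_v$ and $F_i = H_i$) says that $f \in H_v$ if and only if (1) $f$ contains some edge of $H_i$ for every $i \in [k]$, and (2) $f$ is inclusion-minimal subject to (1). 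Recall that every edge of $H_i$ is either some $\tilde{e}_j$ or of the form $e \setminus e_i$ for some $e \in H \setminus \{e_1,\ldots,e_k\}$, so the first clause amounts to: for each $i$, either $\tilde{e}_j \subseteq f$ for some $j$, or there is $h_i \in H\setminus\{e_1,\ldots,e_k\}$ with $h_i \subseteq f \cup e_i$.

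First I would verify $\tilde{e}_j \in H_v$ for every $j$. Since $\tilde{e}_j$ is itself an edge of every $H_i$, it satisfies (1); for the minimality in (2), I would observe that no proper subset $f' \subsetneq \tilde{e}_j$ contains an edge of $H_j$: none of the $\tilde{e}_\ell$'s can sit inside $\tilde{e}_j$ properly because the $e_\ell$'s are inclusion-minimal edges of $H$, and no $e\setminus e_j$ with $e \in H\setminus\{e_1,\ldots,e_k\}$ can fit inside $\tilde{e}_j \subseteq e_j$ since that would force $e \subseteq e_j$, contradicting the minimality of $e_j$ together with $e \neq e_j$.

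Next, for the nontrivial part, take $f \in H_v \setminus \{\tilde{e}_1,\ldots,\tilde{e}_k\}$. I would first show $\tilde{e}_j \not\subseteq f$ for all $j$: otherwise $\tilde{e}_j \subsetneq f$ (strict because $f \neq \tilde{e}_j$), but $\tilde{e}_j$ satisfies (1), contradicting the minimality of $f$. With this in hand, the "edge of $H_i$ inside $f$" clause in condition (1) can only be realized by some $h_i \setminus e_i$ with $h_i \in H\setminus\{e_1,\ldots,e_k\}$, which is exactly condition (i). And condition (2) of Lemma~\ref{lem3} specializes to: no $f' \subsetneq f$ satisfies (i) (a proper subset containing some $\tilde{e}_j$ is ruled out by $\tilde{e}_j \not\subseteq f$), which is precisely condition (ii). For the converse, given $f$ satisfying $\tilde{e}_i \not\subseteq f$ for all $i$ together with (i) and (ii), condition (i) directly gives the first clause of Lemma~\ref{lem3}, and any $f' \subsetneq f$ satisfying that clause must also satisfy (i) (again because $\tilde{e}_j \not\subseteq f' \subseteq f$), so (ii) forces $f' = f$, giving minimality; thus $f \in H_v$, and $f \neq \tilde{e}_i$ because $\tilde{e}_i \not\subseteq f$.

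The argument is essentially bookkeeping, and the only point that requires care is the bifurcation in how condition (1) of Lemma~\ref{lem3} can be satisfied—via a $\tilde{e}_j$ or via an $e\setminus e_i$. By handling the $\tilde{e}_j$'s first (showing they are already in $H_v$ and that no other edge of $H_v$ can contain them), this alternative collapses and the rest translates cleanly.
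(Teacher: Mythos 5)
Your proposal is correct and follows essentially the same route as the paper: identify $I(H_v)=\bigcup_{i=1}^k I(H_i)$ via Lemma~\ref{lem2}, apply Lemma~\ref{lem3}, verify $\tilde{e}_j\in H_v$ by the same minimality argument, and then translate ``contains an edge of $H_i$'' into condition (i) once the $\tilde{e}_j\subset f$ alternative is excluded (the paper packages this last translation as a separate claim, but the content is identical).
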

\addtocounter{theorem}{-1}
\endgroup
\begin{proof}
Let $H_i$ be defined as above.
Note that $I(H_v)=\bigcup_{i=1}^{k} I(H_i)$ by Theorem~\ref{thm:main} and Lemma~\ref{lem2}.
We will prove the statement by applying Lemma~\ref{lem3} to $I(H_v)=\bigcup_{i=1}^{k} I(H_i)$.
By Lemma~\ref{lem3}, $f \in H_v$ if and only if 
\begin{enumerate}
    \item there is an edge $f_i \subset f$ of $H_i$ for each $i \in [k]$, and
    \item $f$ is inclusion-minimal subject to (1).
\end{enumerate}

We first prove that $\tilde{e}_j \in H_v$ for each $j \in [k]$.
Since $\tilde{e}_j$ itself is an edge of $H_i$, thus it contains an edge of $H_i$ for all $i \in [k]$.
To show the minimality of $\tilde{e}_j$, suppose there is a proper subset $g$ of $\tilde{e}_j$ such that $g$ contains an edge $h_j$ of $H_j$.
If $h_j=\tilde{e}_i$ for some $i \in [k]$, then we have $\tilde{e}_i=h_j \subset g \subsetneq \tilde{e}_j$, and hence we obtain $e_i \subsetneq e_j$ and it contradicts the assumption that every edge in $H$ is inclusion-minimal.
Thus $h_j=e \setminus e_j$ for some $e \in H\setminus\{e_1,e_2,\ldots,e_k\}$.
Then we obtain $e \setminus e_j \subset g \subsetneq \tilde{e}_j$, and it implies that $e \subsetneq e_j$, which is again a contradiction to the assumption that every edge in $H$ is inclusion-minimal.
Thus we obtain that $\tilde{e}_1,\tilde{e}_2,\ldots,\tilde{e}_k \in H_v$.

Now to prove the remaining part, we claim the following.
\begin{claim}\label{claim2}
    Suppose $f \subset V(H) \setminus\{v\}$ and $\tilde{e}_j \not\subset f$ for all $j \in [k]$.
    Then there is an edge $f_i\subset f$ of $H_i$ if and only if there is an edge $h_i \in H \setminus \{e_1,e_2,\ldots,e_k\}$ such that $h_i \subset f \cup e_i$.
\end{claim}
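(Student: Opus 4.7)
The plan is to prove Claim~\ref{claim2} by directly unpacking the definition $H_i=\{e \setminus e_i : e \in H\setminus\{e_1,e_2,\ldots,e_k\}\} \cup \{\tilde{e}_1,\tilde{e}_2,\ldots,\tilde{e}_k\}$ and handling the two types of edges of $H_i$ separately. The hypothesis $\tilde{e}_j \not\subset f$ for all $j$ is precisely what kills the possibility that the edge of $H_i$ sitting inside $f$ is one of the $\tilde{e}_j$'s, so after excluding that case the correspondence is immediate.

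For the $(\Leftarrow)$ direction, assume $h_i \in H \setminus \{e_1,\ldots,e_k\}$ satisfies $h_i \subset f \cup e_i$. I would set $f_i := h_i \setminus e_i$. From $h_i \subset f \cup e_i$ we have $f_i \subset f$, and by construction $f_i$ is of the form $e \setminus e_i$ with $e \in H\setminus\{e_1,\ldots,e_k\}$, so $f_i \in H_i$ once we check it is nonempty. Nonemptiness follows from the standing inclusion-minimality assumption on edges of $H$: if $h_i \setminus e_i = \varnothing$, then $h_i \subsetneq e_i$ (as $h_i \neq e_i$), contradicting the minimality of $e_i$.

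For the $(\Rightarrow)$ direction, suppose $f_i \subset f$ is an edge of $H_i$. By the definition of $H_i$, either $f_i = \tilde{e}_j$ for some $j \in [k]$, or $f_i = e \setminus e_i$ for some $e \in H \setminus \{e_1,\ldots,e_k\}$. The first option is ruled out immediately by the hypothesis $\tilde{e}_j \not\subset f$. In the second option, setting $h_i := e$ gives an edge in $H \setminus \{e_1,\ldots,e_k\}$ with $h_i \setminus e_i = f_i \subset f$, hence $h_i \subset f \cup e_i$, as required.

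The proof itself is almost entirely definition-chasing; the only nontrivial point is the nonemptiness check in $(\Leftarrow)$, and that is exactly where the inclusion-minimality of edges of $H$ is used. Once Claim~\ref{claim2} is in hand, it plugs directly into the setup preceding it: Theorem~\ref{thm:main} combined with Lemma~\ref{lem2} gives $I(H_v)=\bigcup_{i=1}^{k} I(H_i)$, Lemma~\ref{lem3} then characterizes membership in $H_v$ as containing an edge of each $H_i$ minimally, and Claim~\ref{claim2} rewrites ``contains an edge of $H_i$'' in the concrete form required by Theorem~\ref{thm:structure}.
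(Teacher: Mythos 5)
Your proof is correct and follows essentially the same definition-chasing argument as the paper: in the forward direction the hypothesis $\tilde{e}_j \not\subset f$ rules out $f_i$ being one of the $\tilde{e}_j$, forcing $f_i = h_i \setminus e_i$, and the reverse direction takes $f_i := h_i \setminus e_i$ directly. Your additional nonemptiness check for $h_i \setminus e_i$ (via inclusion-minimality of edges of $H$) is a small point the paper glosses over, but otherwise the two arguments coincide.
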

\begin{proof} 
Suppose that there is an edge $f_i\subset f$ of $H_i$.
Since $\tilde{e}_j \not\subset f$ for all $j \in [k]$, we know $f_i\neq \tilde{e}_j$ for all $j \in [k]$.
Thus there is an edge $h_i \in H\setminus\{e_1,e_2,\ldots,e_k\}$ such that $f_i=h_i \setminus e_i$, and hence $h_i \subset f \cup e_i$.
Now for the opposite, if there is an edge $h_i \in H \setminus \{e_1,e_2,\ldots,e_k\}$ such that $h_i \subset f \cup e_i$, then obviously $h_i \setminus e_i \in H_i$ and $h_i \setminus e_i \subset f \setminus e_i \subset f$.
\renewcommand{\qedsymbol}{$\blacksquare$}
\end{proof}

To prove the only if part, assume that $f \in H_v \setminus\{\tilde{e}_1,\tilde{e}_2,\ldots,\tilde{e}_k\}$.
Then by the minimality of each edge of $H_v$, we have $\tilde{e}_i \not\subset f$ for all $i \in [k]$.
In addition, by Lemma~\ref{lem3} and Claim~\ref{claim2}, it is obvious that $f$ satisfies (i),(ii).

Now, to prove the if part, assume that $\tilde{e}_i \not\subset f$ for all $i \in [k]$ and $f$ satisfies (i),(ii).
Then it is obvious that $f \not\in \{\tilde{e}_1,\tilde{e}_2,\ldots,\tilde{e}_k\}$.
In addition, by Claim~\ref{claim2}, the condition (i) is equivalent to that there is an edge $f_i \subset f$ of $H_i$ for each $i \in [k]$.
Then by Lemma~\ref{lem3}, we obtain $f \in H_v$, and hence we have $f \in H\setminus\{\tilde{e}_1,\tilde{e}_2,\ldots,\tilde{e}_k\}$.
This completes the proof.
\end{proof}

\section{Bounds on the total Betti numbers}\label{sec:KM}

In this section, we present the proof of Theorem~\ref{thm:main2}.
Moreover, we show that if a hypergraph $H$ has only bounded number of disjoint ternary Berge cycles, then $\beta(I(H))$ is also bounded.

\subsection{Proof of Theorem~\ref{thm:main2}}
Let $H$ be a hypergraph and $e=\{v_1,v_2,\ldots,v_k\}$ be an edge of $H$.
We define a new hypergraph, the \textit{edge-star expansion} $H_e$ at $e$ on $H$ obtained from $H$ by deleting the edge $e$ and adding new vertices $\{w,u_1,u_2,\ldots,u_k\}$ and new edges $$\{\{w,u_1\},\{w,u_2\},\ldots,\{w,u_k\},\{u_1,v_1\},\{u_2,v_2\},\ldots,\{u_k,v_k\}\}.$$
That is, $$H_e=(H\setminus \{e\}) \cup \{\{w,u_1\},\{w,u_2\},\ldots,\{w,u_k\},\{u_1,v_1\},\{u_2,v_2\},\ldots,\{u_k,v_k\}\}.$$
See Figure~\ref{fig:sc_ex3} for an example with $k=4$.
\begin{figure}[htbp]
    \centering
    \includegraphics[scale=1.4]{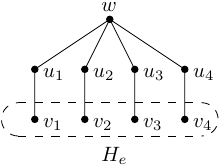}
    \caption{The edge-star expansion $H_e$ at $e=\{v_1,v_2,v_3,v_4\}$.}
    \label{fig:sc_ex3}
\end{figure}

\begin{lemma}\label{lem5}
Let $H$ be a hypergraph and $e$ be an edge of $H$.
Then $I(H_e) \simeq \Sigma I(H)$.
\end{lemma}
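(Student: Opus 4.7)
The plan is to apply the star cluster theorem (Theorem~\ref{thm:main}) to the hypergraph $H_e$ at the newly added vertex $w$. The edges of $H_e$ containing $w$ are precisely $\{w,u_1\},\{w,u_2\},\ldots,\{w,u_k\}$, so $w$ is not isolated. Moreover, any Berge cycle of length $3$ through $w$ would need to use two such edges $\{w,u_i\}$ and $\{w,u_j\}$ (with $i\neq j$), together with a third edge of $H_e$ containing both $u_i$ and $u_j$. Since the only edges of $H_e$ incident to $u_i$ are $\{w,u_i\}$ and $\{u_i,v_i\}$, and neither of these contains any other $u_j$, no such cycle exists. Consequently $w$ lies on no Berge $3$-cycle at all, in particular on no induced one, and Theorem~\ref{thm:main} yields
\[I(H_e)\simeq \Sigma\Bigl(\st(w)\cap \bigcup_{i=1}^k \st(\{u_i\})\Bigr).\]

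The remaining task is to identify the complex in parentheses with $I(H)$. I would do this by describing each piece $\st(w)\cap \st(\{u_i\})$ explicitly. A face $\sigma$ of $I(H_e)$ lying in $\st(w)$ must avoid every $u_j$ (because $\{w,u_j\}$ is an edge), and lying in $\st(\{u_i\})$ additionally forces $w\notin\sigma$ and $v_i\notin\sigma$ (because of the edges $\{w,u_i\}$ and $\{u_i,v_i\}$). Hence every such $\sigma$ is contained in $V(H)\setminus\{v_i\}$, and since $\sigma$ then meets none of the newly introduced vertices, the only edges of $H_e$ that could sit inside $\sigma$ are edges of $H\setminus\{e\}$. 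A direct check of the converse direction gives
\[\st(w)\cap \st(\{u_i\})=\{\sigma\in I(H\setminus\{e\}): v_i\notin\sigma\}.\]
Taking the union over $i$,
\[\st(w)\cap \bigcup_{i=1}^k \st(\{u_i\})=\{\sigma\in I(H\setminus\{e\}): e\not\subseteq\sigma\}=I(H),\]
where the last equality uses that $\sigma$ is independent in $H$ if and only if it is independent in $H\setminus\{e\}$ and does not contain $e$. Combining this with the display above proves $I(H_e)\simeq \Sigma I(H)$.

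The main obstacle is mostly organizational: one must be careful that $\st(w)\cap\st(\{u_i\})$, computed inside $I(H_e)$, really does coincide with the stated subcomplex of $I(H\setminus\{e\})$, and that the union over $i$ collapses exactly to the condition $e\not\subseteq\sigma$. The Berge $3$-cycle check is transparent from the local structure around $w$, so once Theorem~\ref{thm:main} is invoked the proof is essentially an unpacking of definitions.
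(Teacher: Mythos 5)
Your proof is correct and follows the same strategy as the paper: apply Theorem~\ref{thm:main} to $H_e$ at the new apex vertex $w$ (checking that $w$ is not isolated and lies on no Berge $3$-cycle) and then identify the resulting complex $\st(w)\cap\bigcup_{i=1}^{k}\st(\{u_i\})$ with $I(H)$. The only difference is cosmetic: the paper performs this identification by computing the hypergraph $(H_e)_w$ via Corollary~\ref{cor:structure} and observing that it is $H$ together with singleton edges $\{u_i\}$, whereas you unpack the faces of the star-cluster intersection directly; both verifications are routine and your face-level computation is correct.
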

\begin{proof}
We claim that $(H_e)_w=H\cup\{\{u_1\},\{u_2\},\ldots,\{u_k\}\}$.
Note that $I((H_e)_w)=I(H)$ since all vertices in the singleton edges cannot appear in the independence complex.
Then we can conclude that $I(H_e) \simeq \Sigma I((H_e)_w) = \Sigma I(H)$ by Theorem~\ref{thm:main}.

Thus it is enough to show $(H_e)_w=H\cup\{\{u_1\},\{u_2\},\ldots,\{u_k\}\}$, and for that, we will use Corollary~\ref{cor:structure}.
By Corollary~\ref{cor:structure}, we obtain the edges of $(H_e)_w$ as follows:
\begin{itemize}
    \item In the step (1), we obtain $\{u_1\},\{u_2\},\ldots,\{u_k\}$ since $\{w,u_1\},\{w,u_2\},\ldots,\{w,u_k\}$ are the edges containing $w$ in $H_e$.
    \item In the step (2), we obtain all edges of $H_e$ not containing $w$, that means we obtain all edges of $H$ except $e$ and $\{u_1,v_1\},\{u_2,v_2\},\ldots,\{u_k,v_k\}$.
    \item In the step (3), since $\{u_i,v_i\}$ is the only edge in $H_e \setminus \{\{w,u_1\},\{w,u_2\},\ldots,\{w,u_k\}\}$ intersects with $\{w,u_i\}$, we only obtain $e=\bigcup_{i=1}^{k} \{v_i\}$. 
\end{itemize}
Now we collect only inclusion-minimal elements among obtained by (1)--(3), then we obtain $(H_e)_w=H\cup\{\{u_1\},\{u_2\},\ldots,\{u_k\}\}$ as we wanted.
\end{proof}

\begin{lemma}\label{lem6}
For a positive integer $k$, if $t(H) < k$, then $t(H_e) < k$.
\end{lemma}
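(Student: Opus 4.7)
The plan is to prove the contrapositive: show that $t(H_e) \ge k$ implies $t(H) \ge k$. Accordingly, I start with $k$ pairwise disjoint ternary Berge cycles $C_1, \ldots, C_k$ in $H_e$ and build $k$ pairwise disjoint ternary Berge cycles in $H$. The driving observation is structural: in $H_e$ each new vertex $u_i$ is incident to exactly the two new edges $\{w, u_i\}$ and $\{u_i, v_i\}$, and $w$ is incident only to the new edges $\{w, u_1\}, \ldots, \{w, u_k\}$. Consequently, every new edge contains some $u_i$ whose only other incident edge contains $w$, so any Berge cycle in $H_e$ that uses a new edge must pass through $w$. By the vertex-disjointness of the $C_\ell$'s, at most one of them contains $w$.

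If no $C_\ell$ contains $w$, then no $C_\ell$ uses any new vertex or new edge, so each $C_\ell$ is a Berge cycle in $H \setminus \{e\} \subseteq H$, and these are pairwise disjoint in $H$. Otherwise, let $C_1$ be the unique such cycle. At $w$ its two cycle-edges must be $\{w, u_i\}$ and $\{w, u_j\}$ for some distinct $i, j \in [k]$; since each of $u_i, u_j$ has exactly two incident edges in $H_e$, the next cycle-edges at $u_i, u_j$ are forced to be $\{u_i, v_i\}$ and $\{u_j, v_j\}$. Hence $C_1$ contains the 4-edge subpath $v_i, \{u_i, v_i\}, u_i, \{w, u_i\}, w, \{w, u_j\}, u_j, \{u_j, v_j\}, v_j$, and by the same degree-$2$ argument the rest of $C_1$ is a path $P$ from $v_j$ back to $v_i$ that uses no further new vertex or edge, so $P \subseteq H \setminus \{e\}$. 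I would then form $C_1'$ in $H$ by replacing this 4-edge subpath with the single edge $e$ (noting that $v_i, v_j \in e$). Its length is $|C_1| - 3$, which is still divisible by $3$ and at least $3$ because the $5$-vertex subpath forces $|C_1| \ge 5$ and ternary then forces $|C_1| \ge 6$.

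To finish, I would verify that $C_1', C_2, \ldots, C_k$ are pairwise disjoint in $H$: the vertex and edge sets of $C_1'$ are contained in the vertex and edge sets of $C_1$ together with the single edge $e$, and $e \notin H_e$, so $e$ is used by no $C_\ell$. This yields $t(H) \ge k$ and hence the desired contrapositive. The main obstacle is the local structural analysis around $w$: one must carefully use the low degree of $u_1, \ldots, u_k$ in $H_e$ to pin down the exact form of the 4-edge subpath through $w$ and to exclude any other new vertex or new edge from appearing elsewhere on $C_1$, thereby ensuring that the replacement produces a genuine, ternary Berge cycle in $H$. Once this local-replacement step is established, the disjointness bookkeeping is routine.
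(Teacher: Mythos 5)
Your proposal is correct and follows essentially the same route as the paper: both arguments use the degree-two structure of the new vertices $u_i$ to show that any cycle of $H_e$ using a new edge must traverse the four-edge detour $v_i\, \{v_i,u_i\}\, u_i\, \{u_i,w\}\, w\, \{w,u_j\}\, u_j\, \{u_j,v_j\}\, v_j$ through $w$, that at most one of the disjoint cycles can do so, and that replacing this detour by the single edge $e$ yields a Berge cycle of length reduced by $3$, preserving ternarity and pairwise disjointness. Your write-up is, if anything, slightly more explicit than the paper's about why the four new edges must occur consecutively.
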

\begin{proof}
Suppose $t(H_e) \geq k$, that is, $H_e$ has $k$ pairwise disjoint ternary Berge cycles $C_1,C_2,\ldots,C_{k}$.
If all edges in $C_1,C_2,\ldots,C_{k}$ are edges of $H$, then $t(H) \geq k$ which is a contradiction to the assumption.
Thus we may assume that $C_1=x_1e_1x_2e_2\cdots x_{3m}e_{3m}$ contains an edge in $H_e \setminus H$.
Recall that $$H_e \setminus H=\{\{w,u_1\},\{w,u_2\},\ldots,\{w,u_k\},\{u_1,v_1\},\{u_2,v_2\},\ldots,\{u_k,v_k\}\}.$$
Thus if $C_1$ has an edge in $H_e \setminus H$, then there are exactly $4$ consecutive edges in $C$ contained in $H_e \setminus H$. 
Without loss of generality, assume $e_1=\{u_1,v_1\}$, $e_2=\{w,u_1\}$, $e_3=\{w,u_2\}$, $e_4=\{u_2,v_2\}$ and $x_1=v_1$, $x_2=u_1$, $x_3=w$, $x_4=u_2$, $x_5=v_2$.
Note that all other edges $e_5, \ldots, e_{3m}$ are edges of $H$.
Then $C'_1=v_1 e v_2 e_5 x_6 e_6 \cdots x_{3m} e_{3m}$ is a Berge cycle of length $3m-3$ in $H$.

Now, for each $i \in \{2,3,\ldots,k\}$, the vertex $w$ does not appear in $C_i$ since $C_i$ is disjoint with $C_1$.
Then the above argument implies that $C_i$ cannot contain an edge in $H_e\setminus H$.
Thus we can easily check that $C'_1,C_2,\ldots,C_{k}$ are pairwise disjoint ternary Berge cycles in $H$, implying $t(H) \geq k$ which is again a contradiction.
\end{proof}

When $k=0$, the above lemma states the following:
\begin{corollary}\label{cor1}
Let $H$ be a hypergraph and $e$ be an edge of $H$.
If $H$ has no ternary Berge cycle, then neither does $H_e$.
\end{corollary}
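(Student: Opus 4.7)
The plan is to derive Corollary~\ref{cor1} directly from Lemma~\ref{lem6}. The hypothesis ``$H$ has no ternary Berge cycle'' is just the statement $t(H)=0$, which in particular gives $t(H) < 1$. Applying Lemma~\ref{lem6} with $k=1$ then yields $t(H_e) < 1$, forcing $t(H_e)=0$, i.e.\ $H_e$ has no ternary Berge cycle. So the corollary is simply the $k=1$ instance of the lemma, unpacked through the definition of $t(\cdot)$.

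If one preferred to argue directly (without invoking $t$), the idea would mirror the proof of Lemma~\ref{lem6} but specialized to a single cycle. Assume for contradiction that $H_e$ contains a ternary Berge cycle $C=x_1 e_1 x_2 e_2 \cdots x_{3m} e_{3m}$. If all edges $e_1,\dots,e_{3m}$ lie in $H$, then $C$ is already a ternary Berge cycle in $H$, contradicting the hypothesis. Otherwise $C$ uses some edge of $H_e \setminus H$; since the new edges form a tree (a star of $2$-paths through the fresh vertex $w$), the only way they can participate in a cycle is as a consecutive block $\{u_i,v_i\},\{w,u_i\},\{w,u_j\},\{u_j,v_j\}$ for some $i\ne j$. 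Replacing that length-$4$ segment by the single edge $e$ of $H$ (using $v_i,v_j \in e$) produces a Berge cycle $C'$ in $H$ of length $3m-3$, which is still divisible by $3$, again contradicting the hypothesis.

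The only point to be careful about is ruling out degeneracies: that $C'$ is indeed a Berge cycle (distinct vertices and distinct edges), and that the length $3m-3$ is genuinely positive or that length $0$ does not cause an issue. The distinctness of vertices in $C'$ follows from the distinctness of vertices in $C$ together with the fact that $w,u_1,\dots,u_k$ do not appear among $x_1,v_1,v_2,x_5,\dots,x_{3m}$; the distinctness of edges in $C'$ follows similarly, using that $e \notin H\setminus\{e\}$ but the other edges $e_5,\dots,e_{3m}$ were already in $H$. The case $m=1$ is vacuous because a Berge cycle using four consecutive new edges already has length at least $4>3$.

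The main obstacle is essentially notational rather than mathematical: all the real content is present in Lemma~\ref{lem6}, so the cleanest writeup is one line invoking that lemma with $k=1$, and the corollary follows immediately.
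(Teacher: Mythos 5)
Your proposal is correct and matches the paper exactly: the paper presents Corollary~\ref{cor1} as an immediate specialization of Lemma~\ref{lem6} with no separate proof, and your reading of it as the $k=1$ instance (i.e.\ $t(H)<1 \Leftrightarrow t(H)=0$) is the right one — indeed the paper's phrase ``when $k=0$'' is a slight slip, since the lemma is stated for positive $k$. The direct argument you sketch as a backup is just the single-cycle case of the lemma's own proof, so nothing further is needed.
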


Now we are ready to prove Theorem~\ref{thm:main2}. This can be done by converting the original hypergraph without a ternary Berge cycle to a graph without a cycle of length $0$ modulo $3$, preserving the total Betti number. Then it follows from the result in \cite{WZ25}:
\begin{theorem}\label{thm:graphcase}\cite{WZ25}
If a graph $G$ has no induced cycle of length $0$ modulo $3$, then $\beta(I(G)) \leq 1$.
\end{theorem}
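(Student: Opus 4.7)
The plan is to proceed by strong induction on $|V(G)|$, using the star cluster theorem (Theorem~\ref{thm:starcluster}) as the main engine. The base cases $|V(G)| \leq 2$ are immediate. A crucial initial observation is that since $C_3$ is an induced cycle of length $3$, every ternary graph $G$ is triangle-free; in particular every non-isolated vertex of $G$ lies outside all triangles, so Theorem~\ref{thm:starcluster} applies at any non-isolated vertex. If $G$ has an isolated vertex, then $I(G)$ is a cone and contractible, so $\beta(I(G)) = 0$.

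Assume $G$ has no isolated vertex. The first subcase is when $G$ has a vertex $v$ of degree $1$ with unique neighbor $u$. Applying Theorem~\ref{thm:starcluster} at $v$ yields $I(G) \simeq \Sigma K$, where $K = \st(v) \cap \text{SC}(N_G(v)) = \st(v) \cap \st(u)$. Unpacking the definitions, $A \in \st(v) \cap \st(u)$ iff both $A \cup \{v\}$ and $A \cup \{u\}$ are independent, which forces $A \subseteq V(G) \setminus N_G[u]$ with $A$ independent; hence $K = I(G - N_G[u])$. Since $G - N_G[u]$ is an induced subgraph of $G$ and therefore still ternary, the inductive hypothesis gives $\beta(I(G - N_G[u])) \leq 1$, and Proposition~\ref{prop1} yields $\beta(I(G)) = \beta(\Sigma K) = \beta(K) \leq 1$.

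The hard case is minimum degree at least $2$. Pick a vertex $v$ with $N_G(v) = \{u_1, \ldots, u_k\}$; the star cluster theorem gives $I(G) \simeq \Sigma K$ with $K = \bigcup_{i=1}^{k} \bigl(\st(v) \cap \st(u_i)\bigr)$, and the same computation as in the leaf case identifies each piece $\st(v) \cap \st(u_i)$ with $I(G - N_G[v] - N_G[u_i])$, the independence complex of a ternary induced subgraph; by induction each such piece has total Betti number at most $1$. Controlling $\beta(K)$ for the union is the main obstacle: iterating Lemma~\ref{mvseq-betti} naively produces a bound growing with $k$ and with the Betti numbers of pairwise intersections, which is far too weak. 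My plan is first to reduce to the case where $G$ is $2$-connected (if $G$ has a cut vertex, $I(G)$ decomposes along the cut into independence complexes of smaller ternary subgraphs, and the bound is inherited), and then, in the $2$-connected ternary case, to choose $v$ extremally — for instance lying in a shortest induced cycle, which must have length $\not\equiv 0 \pmod 3$ — and analyze the Mayer--Vietoris long exact sequence for the natural filtration $\st(v) \cap \st(u_1) \subset \bigl(\st(v) \cap \st(u_1)\bigr) \cup \bigl(\st(v) \cap \st(u_2)\bigr) \subset \cdots \subset K$. The induction hypothesis bounds the Betti numbers of every piece and of every overlap $I(G - N_G[v] - N_G[u_i] - N_G[u_j])$, and the ternary hypothesis is exactly what should force the connecting maps in the exact sequence to align so that all but one of the summand contributions cancel. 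Identifying the right extremal vertex and verifying this cancellation from the forbidden cycle structure is the combinatorial heart of the argument, and is the step I expect to require the most delicate case analysis.
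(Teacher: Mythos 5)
The paper does not actually prove this statement: it is imported verbatim as the main theorem of Zhang and Wu \cite{WZ23} and used as a black box in the proof of Theorem~\ref{thm:main2}, so there is no internal proof to compare yours against. Judged on its own, your proposal has a genuine gap. The easy reductions are fine: the isolated-vertex case, and the degree-one case where the identification $\st(v)\cap\st(u)=I(G-N_G[u])$ together with Proposition~\ref{prop1} closes the induction. But the entire content of the theorem is the case of minimum degree at least $2$, and there you only describe a plan. You correctly observe that iterating Lemma~\ref{mvseq-betti} over the filtration of $K=\bigcup_{i}\bigl(\st(v)\cap\st(u_i)\bigr)$ gives a bound that grows with $k$ and with the Betti numbers of the overlaps, and you then assert that the ternary hypothesis ``should force'' the connecting maps in the Mayer--Vietoris sequences to cancel all but one contribution --- but no mechanism for this cancellation is given, no extremal choice of $v$ is specified beyond a vague suggestion, and nothing is verified. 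This is exactly where the Kalai--Meshulam problem is hard: the known proofs (Chudnovsky--Scott--Seymour--Spirkl for the Euler-characteristic version, Zhang--Wu for the total Betti number, and \cite{Kim22} for the homotopy type) all require substantially more structural analysis of ternary graphs than a star-cluster decomposition plus Mayer--Vietoris, and no short cancellation argument of the kind you sketch is known.

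Two further points. First, the cut-vertex reduction is also unjustified as stated: if $x$ is a cut vertex, the natural decomposition $I(G)=\st(x)\cup I(G-x)$ has intersection $I(G-N_G[x])$, and Lemma~\ref{mvseq-betti} then only yields $\beta(I(G))\le \beta(I(G-x))+\beta(I(G-N_G[x]))\le 2$, not $\le 1$; total Betti numbers multiply across a disjoint union (via the join), but there is no analogous formula at a cut vertex, so even this preliminary step needs an argument. Second, be aware that the pieces $\st(v)\cap\st(u_i)\cong I(G-N_G[v]-N_G[u_i])$ and their pairwise intersections are correctly identified and are indeed ternary, so the induction hypothesis does apply to them --- the failure is solely that bounding the Betti numbers of the pieces does not bound the Betti numbers of their union without the missing cancellation. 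As written, the proposal reproduces known easy reductions but leaves the theorem's core unproved; the statement should be used as a citation to \cite{WZ23}, as the paper does, rather than derived from Theorem~\ref{thm:starcluster} in a few lines.
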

\begingroup
\def\thetheorem{\ref{thm:main2}}
\begin{theorem}
If a hypergraph $H$ has no ternary Berge cycle, then $\beta(I(H)) \leq 1$.
\end{theorem}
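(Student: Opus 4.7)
The plan is to reduce the hypergraph case to the graph case handled by Theorem~\ref{thm:graphcase}, using the operation $H \mapsto H_e$ from Lemma~\ref{lem5} to successively replace each hyperedge of size at least $3$ by a family of size-$2$ edges, while preserving both the total Betti number and the absence of ternary Berge cycles.

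Concretely, I would induct on $N(H)$, the number of edges of $H$ of size at least $3$. For the base case $N(H)=0$, the hypergraph $H$ is a graph. By hypothesis $H$ has no Berge cycle of length divisible by $3$, which in the graph setting is the same as having no cycle of length $\equiv 0 \pmod{3}$, hence certainly no induced such cycle. Theorem~\ref{thm:graphcase} then gives $\beta(I(H)) \leq 1$.

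For the inductive step, assume $N(H) \geq 1$ and choose any $e \in H$ with $|e| \geq 3$. Form $H_e$ as in Lemma~\ref{lem5}: this deletes $e$ and introduces the fresh vertices $w, u_1, \ldots, u_k$ together with the $2k$ new edges $\{w, u_i\}$ and $\{u_i, v_i\}$, all of size $2$. Thus $N(H_e) = N(H) - 1$. Lemma~\ref{lem5} yields $I(H_e) \simeq \Sigma I(H)$, and Proposition~\ref{prop1} then gives $\beta(I(H_e)) = \beta(I(H))$. Corollary~\ref{cor1} guarantees that $H_e$ still has no ternary Berge cycle, so the inductive hypothesis applied to $H_e$ completes the proof.

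The one point that could be a real obstacle, but is already settled by the earlier lemmas, is verifying that the replacement operation cannot manufacture a new ternary Berge cycle. This is exactly the content of Corollary~\ref{cor1} (a special case of Lemma~\ref{lem6}), whose proof uses the observation that any Berge cycle of $H_e$ through the star at $w$ traverses four consecutive new edges that can be collapsed back to the single edge $e$ of $H$, shortening the cycle by $3$ and therefore preserving divisibility of its length by $3$. With that combinatorial fact in hand, the induction closes cleanly and the theorem follows.
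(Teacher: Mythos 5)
Your proposal is correct and follows the paper's own proof essentially verbatim: induction on the number of edges of size at least $3$, with the base case handled by Theorem~\ref{thm:graphcase} and the inductive step by combining Lemma~\ref{lem5}, Proposition~\ref{prop1}, and Corollary~\ref{cor1} applied to $H_e$. No gaps.
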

\addtocounter{theorem}{-1}
\endgroup

\begin{proof}
Let $H$ be a hypergraph with no ternary Berge cycle.
We claim that there is a graph $G$ with no ternary cycle with $\beta(I(G))=\beta(I(H))$.
Then by Theorem~\ref{thm:graphcase}, it implies that $\beta(I(H))=\beta(I(G)) \leq 1$.

We apply induction on the number of edges of size at least $3$.
As mentioned in Section~\ref{sec:pre}, we assume that $H$ has no edge of size $1$.
Thus the base case of the induction is when all edge of $H$ has the size $2$.
In this case, the statement follows from Theorem~\ref{thm:graphcase}.
Now suppose $H$ has an edge $e$ of size at least $3$.
We observe the following about $H_e$:
\begin{itemize}
    \item By the definition, the number of edges of size at least $3$ in $H_e$ is one less than that of $H$.
    \item $\beta(I(H))=\beta(I(H_e))$ by Lemma~\ref{lem5} and Prop~\ref{prop1}.
    \item Corollary~\ref{cor1} implies that $H_e$ also has no ternary Berge cycle.
\end{itemize}
By the induction hypothesis, $\beta(I(H_e)) \leq 1$, thus it follows that $\beta(I(H)) \leq 1$.
\end{proof}

    A natural question that arises from Theorem~\ref{thm:main2} is which homotopy types that $I(H)$ can have for such hypergraphs.
    We suspect that, if $H$ contains no ternary Berge cycles, then $I(H)$ should be either contractible or homotopy equivalent to a sphere, as in the case of graphs~\cite{Kim22}.
    However, this does not immediately follow from our argument because of the existence of a complex whose suspension is homotopy equivalent to a sphere while the complex itself is not.
    By the double suspension theorem \cite{Cannon79}, the double suspension $\Sigma^2 X$ of a homology $d$-sphere $X$ is homotopy equivalent to $\mathbb{S}^{d+2}$.
    For instance, if we consider the Poincar\'{e} homology $3$-sphere, then its fundamental group has order $120$ and hence it is not homotopy equivalent to a sphere, but its double suspension is homotopy equivalent to $\mathbb{S}^5$.
    As a conclusion, the following question still remains:
\begin{question}\label{que1}
If a hypegraph $H$ has no ternary Berge cycle, then is $I(H)$ either contractible or homotopy equivalent to a sphere?
\end{question}

\begin{remark}
In the proof of Theorem~\ref{thm:main2}, one of the key parts is Corollary~\ref{cor1}, which says that if $H$ has no ternary Berge cycle, then neither does the edge-star expansion $H_e$ at $e$ for any edge $e$ of $H$.
It might be wondered whether the converse direction holds.
That is, if $H$ has no ternary Berge cycle, then does the star dissolution $H_v$ at $v$ also have no ternary Berge cycle for any vertex $v$ of $H$?
Unfortunately, the answer is negative in general, and this is the reason why we consider $H_e$ instead of $H_v$ to prove Theorem~\ref{thm:main2}.
See Figure~\ref{fig:sc_ex4} for an example.

\begin{figure}[htbp]
    \centering
    \includegraphics[scale=1.6]{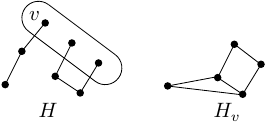}
    \caption{Although $H$ has no ternary Berge cycle, $H_v$ contains a triangle.}
    \label{fig:sc_ex4}
\end{figure}
\end{remark}

\subsection{Bounded number of ternary Berge cycles}
Now we investigate more general case when a hypergraph $H$ contains at most $k$ disjoint ternary Berge cycles, i.e. $t(H) \leq k$. We prove that $\beta(I(H))$ is bounded by a function of $k$. For this, we use the following Erd\H{o}s--P\'{o}sa theorem~\cite{Thom88} for cycles of length $0$ modulo $m$.

\begin{theorem}\cite{Thom88}\label{Erdos-Posa}
There is a function $g_m:\mathbb{N} \to \mathbb{N}$ satisfying the following.
If a graph $G$ contains at most $k$ vertex-disjoint cycles of length $0$ modulo $m$, then there are at most $g_m(k)$ vertices hitting all cycles of length $0$ modulo $m$.
\end{theorem}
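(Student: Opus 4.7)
The plan is to prove this classical theorem of Thomassen by induction on $k$, following the \emph{find-and-delete} paradigm standard in Erd\H{o}s--P\'{o}sa type arguments. The base case $k = 0$ is vacuous: if $G$ has no cycle of length $0 \bmod m$, then $\varnothing$ hits all of them and we may take $g_m(0) = 0$.

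For the inductive step I would assume $g_m(k-1)$ exists. Given $G$ with at most $k$ vertex-disjoint cycles of length $0 \bmod m$, the goal is to produce a set $S$ of size bounded by some $h(k,m)$ such that $G - S$ contains at most $k - 1$ vertex-disjoint such cycles; then $g_m(k) = g_m(k-1) + h(k,m)$ suffices by induction. To build $S$ I would invoke the classical result of Bollob\'{a}s that every graph with minimum degree at least some $f(m)$ contains a cycle of length divisible by $m$. Iteratively deleting low-degree vertices either peels $G$ down to the empty graph (in which case the peeled set itself is the hitting set), or leaves a subgraph of minimum degree at least $f(m)$, inside which we can locate a concrete cycle $C$ of length $0 \bmod m$.

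The heart of the proof, and the main obstacle, is then a \emph{cycle-destruction lemma}: there is a vertex set $S \supset V(C)$ of size bounded only in terms of $k$ and $m$ such that $G - S$ has packing number strictly less than $k$. A clean modern route goes through the Grid Minor Theorem: if the tree-width of $G$ exceeds some $w(k,m)$, then $G$ contains a wall or grid minor large enough to realize $k + 1$ vertex-disjoint cycles of length $0 \bmod m$ (construct these by taking long straight subpaths in the grid and gluing in short ``correcting'' detours whose lengths adjust the total length modulo $m$), contradicting the packing hypothesis. Hence $G$ has bounded tree-width, and a dynamic-programming argument on a tree decomposition --- or a direct linked-paths argument in the spirit of Thomassen's original proof --- produces the required hitting set.

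The delicate step, which I expect to be the main obstacle, is the simultaneous control of two constraints: disjointness and the modular length condition. Cycles of arbitrary length are easy to pack in dense substructures, but forcing the length to lie in a prescribed residue class $0 \bmod m$ requires a careful combinatorial construction (essentially a Chinese-remainder style adjustment of path lengths) that must be executed uniformly in $k$. This is precisely where Thomassen's argument is most technical, and any proof proposal has to invest its effort at this point.
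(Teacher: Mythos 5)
First, note that the paper does not prove this statement at all: it is quoted verbatim from Thomassen's 1988 paper \cite{Thom88} and used as a black box in the proof of Theorem~\ref{thm:graphcase-general}. So there is no in-paper argument to compare yours against; your proposal has to stand on its own as a proof of Thomassen's theorem, and as written it does not. The decisive step --- what you yourself call the ``cycle-destruction lemma'' --- is asserted rather than proved, and it is not a routine lemma: it \emph{is} the theorem. Moreover, the lemma as you state it is internally inconsistent with the way you produce the cycle $C$: you ask for a set $S \supset V(C)$ of size bounded only in terms of $k$ and $m$, but the cycle of length $0 \bmod m$ extracted from a subgraph of large minimum degree (via Bollob\'{a}s) carries no length bound whatsoever, so $|S| \geq |V(C)|$ need not be bounded. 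The find-and-delete induction therefore does not even get off the ground in the form you describe.

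The proposed repair via the Grid Minor Theorem also fails at exactly the point you flag as delicate. Large tree-width yields a large \emph{subdivided} wall as a subgraph, in which the paths joining branch vertices have uncontrolled lengths; the ``short correcting detours'' available inside such a wall change cycle lengths by amounts you cannot prescribe, so you cannot in general steer a cycle into the residue class $0 \bmod m$. This is not a technicality one can wave at: the Erd\H{o}s--P\'{o}sa property genuinely \emph{fails} for cycles in other residue classes (e.g.\ odd cycles, defeated by Escher-wall-type examples of unbounded tree-width with no two disjoint odd cycles and no small odd cycle transversal), so any argument of the shape ``large tree-width $\Rightarrow$ large packing of cycles with the modular constraint'' must use the residue $0$ in an essential, non-generic way. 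Thomassen's actual proof does this through a bespoke analysis of linked path systems and the fact that suitably highly connected graphs contain cycles whose lengths can be adjusted within the class $0 \bmod m$; your sketch defers precisely this content to a parenthetical. As it stands the proposal is an accurate map of where the difficulty lies, but it does not cross that territory, so it is not a proof.
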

The current best known bound for $g_m(k)$ is $g_m(k)=O_m(k \log k)$ by \cite{BHJR19}.

\begin{theorem}\label{thm:graphcase-general}
For every graph $G$ with $t(G) \leq k$, we have $\beta(I(G)) \leq 2^{g_3(k)}=2^{O(k \log k)}$.
\end{theorem}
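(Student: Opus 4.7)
The plan is to combine Theorem~\ref{Erdos-Posa} with a recursive Mayer--Vietoris argument, reducing the general case to the ternary-cycle-free case handled by Theorem~\ref{thm:graphcase}. First I would apply Theorem~\ref{Erdos-Posa} to obtain a set $S \subseteq V(G)$ of size at most $g_3(k)$ that meets every cycle of length $0$ mod $3$ in $G$. Then I would prove by induction on $|S|$ the slightly stronger statement that
\[
    \beta(I(G)) \leq 2^{|S|}
\]
whenever $S$ is such a hitting set. Applied to the original $G$ with the hitting set produced by Erd\H{o}s--P\'{o}sa, this immediately yields $\beta(I(G)) \leq 2^{g_3(k)} = 2^{O(k \log k)}$ using the bound from \cite{BHJR19}.

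The base case $|S| = 0$ means $G$ has no cycle of length $0$ mod $3$, so Theorem~\ref{thm:graphcase} gives $\beta(I(G)) \leq 1 = 2^0$. For the inductive step I would pick any $v \in S$ and decompose $I(G) = A \cup B$ with $A = \st_{I(G)}(v)$ and $B$ the subcomplex of $I(G)$ consisting of faces that do not contain $v$. Then $B$ is naturally identified with $I(G-v)$, the star $A$ is contractible, and $A \cap B$ is the link of $v$ in $I(G)$, which equals $I(G - N_G[v])$. Lemma~\ref{mvseq-betti} together with $\tilde{\beta}_i(A) = 0$ gives, for every $i$,
\[
    \tilde{\beta}_i(I(G)) \leq \tilde{\beta}_i(I(G-v)) + \tilde{\beta}_{i-1}(I(G - N_G[v])),
\]
and summing over $i$ yields $\beta(I(G)) \leq \beta(I(G-v)) + \beta(I(G - N_G[v]))$.

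The key observation that closes the induction is that both $S \setminus \{v\}$ and $S \setminus N_G[v]$ remain hitting sets for cycles of length $0$ mod $3$ in $G - v$ and $G - N_G[v]$ respectively (since any such cycle in either subgraph is already such a cycle in $G$ avoiding the deleted vertices), and both have size at most $|S| - 1$ because $v \in S \cap N_G[v]$. The inductive hypothesis then bounds each summand by $2^{|S|-1}$, giving $\beta(I(G)) \leq 2^{|S|}$. The whole argument is mostly bookkeeping; the only point to verify carefully --- and really the entire content of the recursion --- is that removing a single $v \in S$ shrinks the relevant hitting set by one in \emph{both} branches of the Mayer--Vietoris split, which is exactly what makes the exponent come out to $g_3(k)$ and not something larger.
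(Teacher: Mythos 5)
Your proposal is correct and follows essentially the same route as the paper: the same Mayer--Vietoris split $I(G)=I(G-v)\cup\st(v)$ with contractible star, the same consequence $\beta(I(G))\le\beta(I(G-v))+\beta(I(G-N_G[v]))$, and the same combination of the Erd\H{o}s--P\'{o}sa hitting set with Theorem~\ref{thm:graphcase}. The only difference is presentational: you run an induction on the size of the hitting set, whereas the paper unfolds the same recursion into the explicit sum $\sum_{X\subset W}\beta(I(G(X|W\setminus X)))$ over all $2^{|W|}$ subsets.
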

\begin{proof}
For every vertex $v$ of $G$, we have $$I(G)=I(G-v) \cup I(G-N_G(v)) \text{ and } I(G-v) \cap I(G-N_G(v))=I(G-N_G[v]).$$
Then we have the Mayer--Vietoris sequence~\eqref{mvseq} for $(I(G),I(G-v),I(G-N_G(v))$.
\begin{equation*}
    \cdots \to \tilde{H}_i (I(G-N_G[v]) \to \tilde{H}_i(I(G-v)) \oplus \tilde{H}_i(I(G-N_G(v)) \xrightarrow{\gamma_i} \tilde{H}_i(I(G)) \xrightarrow{\delta_i} \tilde{H}_{i-1} (I(G-N_G[v]) \to \cdots.
\end{equation*}

Since $v$ is an isolated vertex of $G-N_G(v)$, we note that $I(G-N_G(v))$ is a cone with apex $v$, thus it is contractible.
This implies $\tilde{\beta}_i(I(G-N_G(v)))=0$ for all $i$.
Then by Lemma~\ref{mvseq-betti}, we obtain $$\tilde{\beta}_i(I(G)) \leq \tilde{\beta}_i(I(G-v))+\tilde{\beta}_{i-1}(I(G-N_G[v]))$$
for all $i$.
Thus we have $$\beta(I(G)) \leq \beta(I(G-v))+\beta(I(G-N_G[v])$$ for any vertex $v$.

Now, for disjoint vertex subsets $X$ and $Y$, let $G(X|Y)$ be the graph obtained from $G$ by deleting all vertices in $N_G[X] \cup Y$, that is, $G(X|Y)=G-N_G[X]-Y$.
For convenience, we omit curly brackets in the notation $G(X|Y)$, for instance we write $G(v|Y)$ instead of $G(\{v\}|Y)$.
Then the above inequality can be written as
\begin{align}\label{eq:vtx}
\beta(I(G)) \leq \beta(I(G(v|\varnothing)))+\beta(I(G(\varnothing|v)))    
\end{align}
for any vertex $v$. 

Given a vertex subset $U=\{u_1,u_2,\ldots,u_m\}$, by the repeated usage of \eqref{eq:vtx}, we obtain
\begin{align*}
    \beta(I(G)) &\leq \beta(I(G(u_1|\varnothing)))+\beta(I(G(\varnothing|u_1)))\\
    & \leq \beta(I(G(u_1,u_2|\varnothing))+\beta(I(G(u_1|u_2)))+\beta(I(G(u_2|u_1))+\beta(I(G(\varnothing|u_1,u_2)))\\
    & \leq \cdots  \leq \sum_{X \subset U} \beta(I(G(X|U\setminus X))).
\end{align*}

By Theorem~\ref{Erdos-Posa}, we can take a vertex subset $W$ with $|W| \leq g_3(k)$ hitting all ternary cycles of $G$.
Then we have
\[\beta(I(G)) \leq \sum_{X \subset W} \beta(I(G(X|W \setminus X))).\]
On the other hand, since $N_G[X]\cup(W \setminus X) \supset W$, the graph $G(X|W \setminus X)=G-N_G[X]-(W\setminus X)$ is a ternary graph.
Then by Theorem~\ref{thm:graphcase}, we have $\beta(I(G(X|W \setminus X))) \leq 1$ for any $X \subset W$, and hence we obtain 
\[\beta(I(G)) \leq \sum_{X \subset W} \beta(I(G(X|W \setminus X))) \leq 2^{|W|}=2^{g_3(k)}=2^{O(k \log k)}\]
as desired.
\end{proof}

By applying Lemma~\ref{lem5}, Lemma~\ref{lem6} and Proposition~\ref{prop1}, we can obtain the following hypergraph version as a consequence of Theorem~\ref{thm:graphcase-general}.
\begin{corollary}\label{thm:hypergraph-general}
For all hypergraph $H$ with $t(H) \leq k$, we have $\beta(I(H)) \leq 2^{O(k \log k)}$.
\end{corollary}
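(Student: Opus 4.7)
The plan is to mimic the induction used in the proof of Theorem~\ref{thm:main2}, but now additionally tracking the parameter $t$ via Lemma~\ref{lem6}. The point is that the edge-subdivision operation $H \mapsto H_e$ simultaneously (a) preserves the total Betti number of the independence complex, by Lemma~\ref{lem5} together with Proposition~\ref{prop1}; (b) preserves the upper bound on the number of pairwise disjoint ternary Berge cycles, by Lemma~\ref{lem6}; and (c) strictly decreases the number of edges of size at least $3$. Iterating the operation therefore reduces the problem to a graph, to which Theorem~\ref{thm:graphcase-general} applies.

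More concretely, I would proceed by induction on the number of edges of $H$ of size at least $3$. As elsewhere in the paper, we assume $H$ has no singleton edges. For the base case, every edge of $H$ has size exactly $2$, so $H$ is a graph with $t(H) \leq k$, and Theorem~\ref{thm:graphcase-general} directly gives $\beta(I(H)) \leq 2^{g_3(k)} = 2^{O(k \log k)}$. For the inductive step, choose any edge $e$ of $H$ with $|e| \geq 3$ and form $H_e$. By construction $H_e$ has one fewer edge of size at least $3$ than $H$. Lemma~\ref{lem5} gives $I(H_e) \simeq \Sigma I(H)$, so Proposition~\ref{prop1} yields $\beta(I(H_e)) = \beta(I(H))$. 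Moreover, since $t(H) \leq k$ means $t(H) < k+1$, Lemma~\ref{lem6} (applied with $k+1$ in place of $k$) gives $t(H_e) < k+1$, i.e.\ $t(H_e) \leq k$. The inductive hypothesis applied to $H_e$ therefore yields $\beta(I(H)) = \beta(I(H_e)) \leq 2^{O(k \log k)}$, completing the induction.

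The argument is essentially mechanical once the three ingredients are in place, and no serious obstacle arises: the only subtle point is that the $t$-bound really is transferred through each reduction step, which is exactly what Lemma~\ref{lem6} provides. The terminal graph $G$ produced by the iteration satisfies $t(G) \leq k$ and $\beta(I(G)) = \beta(I(H))$, and the bound follows. This is the same scheme as in the proof of Theorem~\ref{thm:main2}, where $t = 0$ was simply the special case that let us invoke Theorem~\ref{thm:graphcase} at the base step; here we invoke the more general Theorem~\ref{thm:graphcase-general} instead.
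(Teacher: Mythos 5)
Your proposal is correct and follows essentially the same route as the paper, which states the corollary as an immediate consequence of Lemma~\ref{lem5}, Lemma~\ref{lem6} and Proposition~\ref{prop1} combined with Theorem~\ref{thm:graphcase-general}. You have simply written out in full the induction on the number of edges of size at least $3$ that the paper leaves implicit (mirroring the proof of Theorem~\ref{thm:main2}), including the correct observation that Lemma~\ref{lem6} applied with $k+1$ transfers the bound $t(H)\leq k$ to $t(H_e)\leq k$.
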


\begin{remark}
A trivial lower bound for Theorem~\ref{thm:graphcase-general} (also for Corollary~\ref{thm:hypergraph-general}) is $2^k$ because we have $\beta(I(G))=2^k$ when $G$ is a disjoint union of $k$ ternary cycles.

In general, if $G=G_1 \dot{\sqcup} G_2$, i.e. $G$ is a disjoint union of two graphs $G_1,G_2$, then it is known that $I(G)= I(G_1)*I(G_2)$ where the \textit{join} of two simplicial complexes $X,Y$ are defined as follows:
\[X*Y=\{\sigma \cup \tau \mid \sigma \in X, \tau \in Y\}.\]
Furthermore, by the K\"{u}nneth formula, it is also known that 
\[\tilde{\beta}_{r+1}(X*Y)=\sum_{i+j=r}\tilde{\beta}_i(X)\tilde{\beta}_j(Y)\]
for all $r$, and it implies $\beta(X*Y)=\beta(X)\beta(Y)$.
Hence we obtain $\beta(I(G))=\beta(I(G_1)) \beta(I(G_2))$ when $G=G_1 \dot{\sqcup} G_2$.

Thus if $G=\dot{\bigsqcup}_{i=1}^{k} C_i$ where each $C_i$ is a ternary cycle, then we obtain
\[\beta(I(G))=\beta(I(C_1)) \beta(I(C_2)) \cdots \beta(I(C_k))=2^k\]
since we know that $\beta(I(C))=2$ for every ternary cycle $C$.
\end{remark}

\section*{Acknowledgement}
The author would like to thank Sang-il Oum for his help in finding the current best known bound on the Erd\H{o}s--P\'{o}sa theorem for cycles of length $0$ modulo $m$, and Minki Kim for helpful discussions throughout this research.
The author also thanks the anonymous referees for their valuable comments, which improved the presentation of this manuscript.

\section*{Funding}
This research was supported by a grant(RS-2025-00558178) from the National Research Foundation of Korea(NRF), funded by the Ministry of Science and ICT(MSIT) of the Korean Government and by Global-Learning \& Academic research institution for Master’s · PhD students, and Postdocs(LAMP) Program of the National Research Foundation of Korea(NRF) grant funded by the Ministry of Education(No. RS-2024-00442775), and by the Institute for Basic Science (IBS-R029-C1).

\end{document}